\newtheorem{thm}{Theorem}[section]
\newtheorem{lem}[thm]{Lemma}
\newtheorem{prop}[thm]{Proposition}
\newtheorem{conj}[thm]{Conjecture}
\theoremstyle{definition}
\newtheorem{defn}[thm]{Definition}
\newtheorem{ex}[thm]{Example}
\theoremstyle{remark}
\newtheorem{rem}[thm]{Remark}
\newcommand{\A}{\mathcal {A}}
\newcommand{\E}{\mathcal {E}}
\newcommand{\Q}{\mathbb{Q}}
\newcommand{\R}{\mathbb{R}}
\newcommand{\Z}{\mathbb{Z}}
\newcommand{\bk}{\mathbf{k}}
\newcommand{\hh}{\z_\A^{(2)}}
\newcommand{\hhs}{\zeta_{\mathcal{A}}^{(2),\star}}
\newcommand{\ZZ}{\mathcal {Z}}
\newcommand{\z}{\zeta}
\def\={\;=\;}
\begin{document}

\title{On finite multiple zeta values of level two}

\author{Masanobu Kaneko}
\address[Masanobu Kaneko]{Faculty of Mathematics, Kyushu University
 744, Motooka, Nishi-ku, Fukuoka, 819-0395, Japan}
\email{mkaneko@math.kyushu-u.ac.jp}

\author{Takuya Murakami}
\address[Takuya Murakami]{Sophia Fukuoka Junior and Senior High School,
1-10-10 Terukuni, Chuo-ku, Fukuoka, 810-0032, Japan}
\email{tak\_mrkm@icloud.com}

\author{Amane Yoshihara}
\address[Amane Yoshihara]{3-27-14-102, Nishi-Ohmiya, Nishi-ku, Saitama, 331-0078, Japan}
\email{amanettiyosshy@me.com}

\keywords{multiple zeta values, finite multiple zeta values, finite Euler sums}
\subjclass[2010]{Primary 11M32; Secondary 11A07}

\dedicatory{Dedicated to Professor Don Zagier on the occasion of his 70th birthday}

\maketitle
\begin{abstract}
We introduce and study a ``level two'' analogue of finite multiple zeta values.  We give conjectural bases 
of the space of finite Euler sums as well as that of usual finite multiple zeta values in terms of these newly defined elements.
A kind of ``parity result'' and certain sum formulas are also presented. 
\end{abstract}

\section{Definitions and conjectures}

The finite multiple zeta value $\z_\A(k_1,\ldots,k_r)$ is an element in the $\Q$-algebra $\A$ defined by
\[ \A\;:=
\prod_{p}\mathbb{Z}/p\mathbb{Z}\, \text{{\LARGE ${\textstyle \left. \middle/ \right.}$}}\!\bigoplus_{p}\mathbb{Z}/p\mathbb{Z} 
\=\{(a_{(p)})_p\,\vert\, a_{(p)}\in \Z/p\Z\}/\sim. \]
Here, $p$ runs over all prime numbers, and the relation $(a_{(p)})_p\sim (b_{(p)})_p$ means that the equality $a_{(p)}=b_{(p)}$ 
holds in $\Z/p\Z$ for all but a finite number of $p$.  
We often identify a representative $a=(a_{(p)})_p$ with the element in $\A$ that it defines.  Precisely, 
$\z_\A(k_1,\ldots,k_r)$ is defined as follows.  
 
\begin{defn} For a tuple of positive integers $(k_1,\ldots,k_r)$ (called an  index), define the ($\A$-) finite multiple zeta value $\z_\A(k_1,\ldots,k_r)\in\A$ by
\begin{equation}\label{def1}  \z_\A(k_1,\ldots, k_r)_{(p)}\=\sum_{0<m_1<\cdots <m_r<p}\frac1{m_1^{k_1}\cdots m_r^{k_r}}\ \bmod p.
\end{equation}
\end{defn}
This is a finite analogue of the usual multiple zeta value in $\R$:
\begin{equation}\label{mzv}   \z(k_1,\ldots, k_r)\=\sum_{0<m_1<\cdots <m_r}\frac1{m_1^{k_1}\cdots m_r^{k_r}}. \end{equation}
To ensure the convergence, we need the condition $k_r>1$ here, but for $\z_\A(k_1,\ldots,k_r)$, obviously we do not need
such a restriction.

In recent years, a vast amount of work has been done on the classical multiple zeta value \eqref{mzv} and its numerous variants and
generalizations including the finite multiple zeta value \eqref{def1}. A central conjecture concerning finite multiple zeta values
predicts a deep connection between finite and classical multiple zeta values (see \cite{Kan19, KZ} for the precise statement). 
For references to the extensive literature on the subject, one may refer to the book \cite{Zh} by Zhao
and the website \cite{Hweb} maintained by Hoffman.   

In this paper, we consider the following ``level two'' variant  $\z_\A^{(2)}(k_1,\ldots,k_r)$ of $\z_\A(k_1,\ldots,k_r)$.

\begin{defn} For an index $(k_1,\ldots,k_r)$, define the finite multiple zeta value of level two $\hh(k_1,\ldots,k_r)$ in $\A$ by 
\begin{equation}\label{def2}  \hh(k_1,\ldots, k_r)_{(p)}\=\sum_{0<m_1<\cdots <m_r<p/2}\frac1{m_1^{k_1}\cdots m_r^{k_r}}\ \bmod p.
\end{equation}
\end{defn}
The difference is that the summation extends up to $p/2$ instead of $p$.  We mention that various congruence properties 
of this sum for special indices was already 
considered in several literatures, for instance in Pilehrood-Pilehrood-Tauraso~\cite{PPT12, PPT}. See also \cite{Zh}.  

For later use, we note here that, first by putting $n_i=2m_i$ in the sum and then changing $n_i$ with $p-n_{i}$, 
we have two expressions 
\begin{align} \hh(k_1,\ldots, k_r)_{(p)}& \=2^{k_1+\cdots +k_r}
\sum_{0<n_1<\cdots <n_r<p\atop n_i:\, \text{even}}\frac1{n_1^{k_1}\cdots n_r^{k_r}}\ \bmod p  \label{even}  \\ 
&\= (-2)^{k_1+\cdots +k_r}
\sum_{0<n_r<\cdots <n_1<p\atop n_i:\, \text{odd}}\frac1{n_r^{k_r}\cdots n_1^{k_1}}\ \bmod p. \label{odd}
\end{align}
In particular, $\hh(k_1,\ldots,k_r)$ may be viewed as a finite analogue of Hoffman's ``t-value'' \cite{Hof19}, up to a constant multiple.  
We further note that, if we write the first sum as 
\[  \sum_{0<n_1<\cdots <n_r<p\atop n_i:\, \text{even}}\frac1{n_1^{k_1}\cdots n_r^{k_r}}
=2^{-r}\sum_{0<n_1<\cdots <n_r<p}\frac{(1+(-1)^{n_1})\cdots (1+(-1)^{n_r})}{n_1^{k_1}\cdots n_r^{k_r}}, \]
we see that $\hh(k_1,\ldots,k_r)$ can be written as a $\Q$-linear combination of ``finite Euler sums,'' as studied for instance
in Zhao~\cite{Zh11, Zh}. 

We introduce three $\Q$-subspaces of $\A$ spanned by the usual finite multiple zeta values, our
level-two analogues, and the finite Euler sums.

\begin{defn} For each integer $k\ge0$, define the $\Q$-vector spaces $\ZZ_{\A,k}^{(1)},\, \ZZ_{\A,k}^{(2)}$,
and $\E_k$ in $\A$ by $\ZZ_{\A,0}^{(1)}=\ZZ_{\A,0}^{(2)}=\E_0=\Q$ and  
\[ \ZZ_{\A,k}^{(1)}:=\sum_{k_1+\cdots+k_r=k\atop r\ge1,\,\forall k_i\ge1}\Q\cdot\z_\A(k_1,\ldots,k_r) \quad(k\ge1),\]
\[ \ZZ_{\A,k}^{(2)}:=\sum_{k_1+\cdots+k_r=k\atop r\ge1,\,\forall k_i\ge1}\Q\cdot\z_\A^{(2)}(k_1,\ldots,k_r) \quad(k\ge1),\]
and \[ \E_k:= \text{$\Q$-span of all finite Euler sums of weight $k$}, \]
namely, all elements in $\A$ of the form 
\[ \left(\sum_{0<m_1<\cdots <m_r<p}\frac{(\pm1)^{m_1}\cdots (\pm1)^{m_r}}{m_1^{k_1}\cdots m_r^{k_r}}\ \bmod p\right)_p \]
with $k_1+\cdots +k_r=k\ (r\ge 1, \forall k_i\ge1)$ and with all possible signs in the numerator.
 Further, we set 
\[ \ZZ_\A^{(1)}\;:=\;\sum_{k=0}^\infty \ZZ_{\A,k}^{(1)},\quad \ZZ_\A^{(2)}\;:=\;\sum_{k=0}^\infty \ZZ_{\A,k}^{(2)},\quad 
\E\;:=\;\sum_{k=0}^\infty \E_k. \]
 \end{defn}

\begin{prop} \label{prop1}
The space $\ZZ_\A^{(2)}$ is a  $\Q$-subalgebra of $\E$.
\end{prop}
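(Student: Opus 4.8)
The plan is to prove two separate assertions: that $\ZZ_\A^{(2)}\subseteq\E$ as $\Q$-vector spaces, and that $\ZZ_\A^{(2)}$ is closed under the multiplication of $\A$. Since $1\in\Q=\ZZ_{\A,0}^{(2)}$, these two facts together yield the subalgebra claim.

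For the containment $\ZZ_\A^{(2)}\subseteq\E$, I would argue weight by weight, showing $\ZZ_{\A,k}^{(2)}\subseteq\E_k$, which is essentially the computation already indicated before the statement. Starting from \eqref{even}, I rewrite the condition ``$n_i$ even'' by means of $\frac12(1+(-1)^{n_i})$, so that
\[
\hh(k_1,\ldots,k_r)_{(p)}\=2^{k_1+\cdots+k_r-r}\sum_{0<n_1<\cdots<n_r<p}\frac{(1+(-1)^{n_1})\cdots(1+(-1)^{n_r})}{n_1^{k_1}\cdots n_r^{k_r}}.
\]
Expanding the product in the numerator gives $2^r$ summands indexed by $(\epsilon_1,\ldots,\epsilon_r)\in\{0,1\}^r$, and the $(\epsilon_1,\ldots,\epsilon_r)$-summand is precisely a finite Euler sum of weight $k_1+\cdots+k_r$ whose $i$-th numerator sign is $(-1)^{\epsilon_i}$ raised to the power $n_i$. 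Because $2$ is invertible modulo every odd prime, this equality holds for all but the single prime $p=2$, hence in $\A$; thus $\hh(k_1,\ldots,k_r)$ is a $\Q$-linear combination of weight-$(k_1+\cdots+k_r)$ finite Euler sums.

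For closure under multiplication I would use the harmonic (stuffle) product, and the essential observation is that both factors of a product $\hh(k_1,\ldots,k_r)\cdot\hh(l_1,\ldots,l_s)$ are, by definition, sums over the \emph{same} range $0<\cdot<p/2$. Writing the product as a double sum over this common range and partitioning according to the relative order of the two increasing sequences $m_1<\cdots<m_r$ and $n_1<\cdots<n_s$ --- with the diagonal cases $m_i=n_j$ merging the two exponents into $k_i+l_j$ --- expresses the product as an explicit $\Z$-linear combination of level-two values $\hh$ of total weight $(k_1+\cdots+k_r)+(l_1+\cdots+l_s)$. This is the standard stuffle identity, unaffected by the change of summation bound from $p$ to $p/2$, and it gives $\ZZ_{\A,k}^{(2)}\cdot\ZZ_{\A,l}^{(2)}\subseteq\ZZ_{\A,k+l}^{(2)}$.

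The proposition is essentially formal once \eqref{even} is available, so I do not expect a serious obstacle; the only point needing care is the bookkeeping. In the first step one must confirm that every term of the expansion genuinely has the admissible form $(\pm1)^{m_1}\cdots(\pm1)^{m_r}/(m_1^{k_1}\cdots m_r^{k_r})$ with the correct weight, and in the stuffle step one must correctly enumerate the interleavings together with their diagonal degenerations. Both verifications are routine.
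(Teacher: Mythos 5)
Your proposal is correct and follows the same route as the paper: the containment $\ZZ_\A^{(2)}\subseteq\E$ via the expansion of the factors $1+(-1)^{n_i}$ in \eqref{even} (which the paper records just before the proposition), and closure under multiplication via the stuffle product applied to the common summation range $0<\cdot<p/2$. You have merely written out in full the bookkeeping that the paper leaves as a remark.
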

\begin{proof}  That the space $\ZZ_\A^{(2)}$ is contained in $\E$ 
has already been remarked above.  And that  $\ZZ_\A^{(2)}$ is closed under
multiplication is seen by the fact that the standard harmonic (or stuffle) product rule
applies also to the defining sum of $\hh(k_1,\ldots,k_r)$ in \eqref{def2}.  
($\E$ is a $\Q$-algebra by the same reasoning.) \end{proof}

Based on an evidence supported by numerical experiments, we propose the following conjecture.

\begin{conj}\label{conj1}
i)  $\ZZ_\A^{(2)}=\E$.

ii)  The set $\{ \hh(k_1,\ldots, k_r)\,\mid\, r\ge1,\,\forall k_i: \text{odd} \ge1 \}$ forms a linear basis of $\ZZ_\A^{(2)}$.
\end{conj}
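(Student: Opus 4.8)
The plan is to treat both parts through the finer \emph{parity-restricted} sums. For an index $(k_1,\ldots,k_r)$ and a pattern $\boldsymbol\varepsilon=(\varepsilon_1,\ldots,\varepsilon_r)\in\{0,1\}^r$, set
\[ S(k_1,\ldots,k_r;\boldsymbol\varepsilon)_{(p)}\=\sum_{0<m_1<\cdots<m_r<p\atop m_i\equiv\varepsilon_i\ (2)}\frac1{m_1^{k_1}\cdots m_r^{k_r}}\ \bmod p. \]
In each slot the numerator of a finite Euler sum is either $1$ or $(-1)^{m_i}$, and $1$ and $(-1)^m$ are related to the even/odd indicators $\tfrac12\bigl(1\pm(-1)^m\bigr)$ by an invertible $\Q$-linear change of variables; hence $\E_k$ is exactly the $\Q$-span of the weight-$k$ sums $S(k_1,\ldots,k_r;\boldsymbol\varepsilon)$. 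By \eqref{even} the all-even pattern equals $2^{-(k_1+\cdots+k_r)}\hh(k_1,\ldots,k_r)$, and the substitution $m_i\mapsto p-m_i$ gives the reversal relation
\[ S(k_1,\ldots,k_r;\varepsilon_1,\ldots,\varepsilon_r)=(-1)^{k_1+\cdots+k_r}\,S(k_r,\ldots,k_1;1-\varepsilon_r,\ldots,1-\varepsilon_1), \]
which interchanges the all-even and all-odd patterns (this is the passage from \eqref{even} to \eqref{odd}). Thus every uniform-parity sum already lies in $\ZZ_\A^{(2)}$, and part i) is reduced to showing that every \emph{mixed} pattern lies in $\ZZ_\A^{(2)}$ as well.

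For this I would induct on the depth $r$, using the stuffle product of Proposition~\ref{prop1} together with the reversal relation. The product of an all-even sum and an all-odd sum carries no diagonal (lower-depth) terms, since an index counted as even can never coincide with one counted as odd; it is therefore a sum over interleavings, i.e.\ a symmetric combination of mixed patterns lying in $\ZZ_\A^{(2)}$. Combining such products with reversal yields a linear system among the mixed patterns of a given weight and depth, with lower-depth contributions known by induction. The delicate point is that these relations only deliver the \emph{symmetric} combinations: already at depth two a short computation leaves a one-parameter indeterminacy, so that stuffle and reversal alone cannot isolate the individual patterns $S(k_1,k_2;0,1)$ and $S(k_2,k_1;1,0)$. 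The missing input is the shuffle (integral) relations, which in the finite setting are not available directly but only conjecturally, through the correspondence with (alternating) symmetric multiple zeta values in the spirit of the Kaneko--Zagier programme; it is here that the argument for part i) becomes conditional.

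Granting part i), the spanning half of part ii) is the ``parity result'' announced in the abstract: any $\hh(k_1,\ldots,k_r)$ with an even entry should be expressible, modulo products of lower depth and modulo odd-index values, through the same stuffle and reversal relations, so that inductively only the odd-index $\hh(k_1,\ldots,k_r)$ remain as generators. On the counting side, the number of odd-index values of weight $k$ is the number of compositions of $k$ into odd parts, whose generating function is $\sum_{r\ge0}\bigl(x/(1-x^2)\bigr)^r=(1-x^2)/(1-x-x^2)$; this is the $k$-th Fibonacci number, so the conjecture predicts $\dim_\Q\E_k$ to equal it.

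The main obstacle is the independence half of part ii): the $\Q$-linear independence of the odd-index values $\hh(k_1,\ldots,k_r)$ in $\A$, equivalently the matching lower bound $\dim_\Q\ZZ_{\A,k}^{(2)}\ge$ (the number of odd compositions of $k$). Constructing enough linear functionals on $\A$ to separate these elements seems inaccessible by elementary congruences, and the natural route is again to transport independence to the archimedean side---via the same correspondence with Hoffman's $t$-values and alternating symmetric multiple zeta values---so that it would follow from a transcendence statement there. Since both the completeness of the relations used for the upper bound and this independence lower bound rest on inputs that are themselves open, the statement is offered as a conjecture rather than a theorem.
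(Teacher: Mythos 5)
This statement is a \emph{conjecture} in the paper: the authors offer no proof, only numerical evidence together with the remark that the number of all-odd indices of weight $k$ equals the conjectural dimension $F_k$ of $\E_k$. Your proposal correctly refrains from claiming a proof, and the structural analysis you give is sound and consistent with what the paper actually establishes. In particular: your identification of $\E_k$ with the span of the parity-pattern sums is correct; your reversal relation is exactly the passage from \eqref{even} to \eqref{odd}; the observation that the stuffle product of an all-even and an all-odd sum has no diagonal terms is correct; and your generating-function count of odd compositions reproduces the Fibonacci count in the paper's Remark following Conjecture~\ref{conj1}. Your diagnosis that stuffle plus reversal leave a one-parameter indeterminacy among the mixed patterns already at depth two is also right, and it is a fair explanation of why part i) is open.

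Two small corrections. First, you attribute the spanning half of part ii) to the parity result, but identity \eqref{parity} only reduces $\hh(k_1,\dots,k_r)$ when the weight and the depth have \emph{opposite} parity; an index such as $(2,2)$ is untouched by it, so the parity result alone cannot eliminate all even entries. The paper's actual partial evidence toward ii) is Theorem~\ref{thm1}, which expresses the sums $S(k,r)$ and $S_1(k,r)$ as explicit linear combinations of the conjectural basis elements via the harmonic-product lemma on the symbols $g(k,r,a)$ --- a mechanism (trading even entries for products with $\hh(2i)=0$) that your sketch does not mention. Second, the unconditional content surrounding the conjecture is only Proposition~\ref{prop1} (the inclusion $\ZZ_\A^{(2)}\subset\E$ and closure under multiplication), which your first paragraph reproves correctly; everything beyond that, as you say, rests on open inputs, which is precisely why the paper states it as a conjecture rather than a theorem.
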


\begin{rem}  The conjectural dimension (as a $\Q$-vector space) of $\E_k\ (k\ge1)$ is given by the Fibonacci number $F_k$
($=F_{k-1}+F_{k-2},\ F_1=F_2=1$) (cf. \cite[\S8.6.3]{Zh}).  The cardinality of the set in ii) above is easily seen to be equal to $F_k$.
Also note that the number of $\hh(k_1,\ldots, k_r)$ of weight $k$ is $2^{k-1}$ which is much smaller than that of 
finite Euler sums of weight $k$, namely $2\cdot 3^{k-1}$.
\end{rem}

\begin{prop} \label{prop2}
The space $\ZZ_\A^{(1)}$ of ordinary finite multiple zeta values is contained in  $\ZZ_\A^{(2)}$; we have the inclusions
$\ZZ_\A^{(1)}\subset \ZZ_\A^{(2)}\subset \E$.
\end{prop}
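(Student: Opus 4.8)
The second inclusion $\ZZ_\A^{(2)}\subset\E$ is already contained in Proposition~\ref{prop1}, so the only thing I would need to establish is $\ZZ_\A^{(1)}\subset\ZZ_\A^{(2)}$. The plan is to express each generator $\z_\A(k_1,\ldots,k_r)$ explicitly as an element of $\ZZ_\A^{(2)}$ by cutting the defining sum at the midpoint $p/2$.

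Fix an index $(k_1,\ldots,k_r)$ and an odd prime $p$ (the prime $p=2$ may be ignored in $\A$). Since $p$ is odd, no summation variable equals $p/2$, so in
\[ \z_\A(k_1,\ldots,k_r)_{(p)}=\sum_{0<m_1<\cdots<m_r<p}\frac1{m_1^{k_1}\cdots m_r^{k_r}} \]
each strictly increasing tuple has a unique break index $j\in\{0,1,\ldots,r\}$ with $0<m_1<\cdots<m_j<p/2<m_{j+1}<\cdots<m_r<p$. First I would split the sum according to $j$.

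For the lower block $m_1<\cdots<m_j<p/2$ the contribution is exactly $\hh(k_1,\ldots,k_j)_{(p)}$. For the upper block I would substitute $n_i=p-m_i$ for $i=j+1,\ldots,r$; this reverses the order, turning $p/2<m_{j+1}<\cdots<m_r<p$ into $0<n_r<\cdots<n_{j+1}<p/2$, and using $m_i^{-k_i}\equiv(-1)^{k_i}n_i^{-k_i}\pmod p$ it produces the factor $(-1)^{k_{j+1}+\cdots+k_r}\,\hh(k_r,k_{r-1},\ldots,k_{j+1})_{(p)}$. Summing over $j$ (with the empty $\hh$ read as $1$) would then give the identity in $\A$
\[ \z_\A(k_1,\ldots,k_r)=\sum_{j=0}^{r}(-1)^{k_{j+1}+\cdots+k_r}\,\hh(k_1,\ldots,k_j)\cdot\hh(k_r,\ldots,k_{j+1}). \]

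Finally, each summand on the right is a product of two level-two values, hence lies in $\ZZ_\A^{(2)}$ because that space is a $\Q$-algebra by Proposition~\ref{prop1}; therefore $\z_\A(k_1,\ldots,k_r)\in\ZZ_\A^{(2)}$, and since such values generate $\ZZ_\A^{(1)}$ the desired inclusion follows. I expect the main obstacle to be purely the bookkeeping in the upper block---correctly tracking the order reversal and collecting the sign $(-1)^{k_{j+1}+\cdots+k_r}$---while the cleanness of the midpoint cut for all but the prime $p=2$ is routine.
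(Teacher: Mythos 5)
Your proposal is correct and follows essentially the same route as the paper: the authors also split the sum at the midpoint $p/2$, substitute $m_j\to p-m_j$ in the upper block to obtain exactly the identity \eqref{key}, and then invoke Proposition~\ref{prop1} for closure under multiplication and for the inclusion in $\E$.
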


\begin{proof}   This can be seen from the identity
\begin{equation}\label{key} 
\z_\A(k_1,\ldots,k_r)\=\sum_{i=0}^r (-1)^{k_{i+1}+\cdots +k_r} \hh(k_1,\ldots,k_i)\hh(k_r,\ldots,k_{i+1}),
\end{equation}
where we set $\hh(\emptyset)=1$, together with Proposition~\ref{prop1} (that $\ZZ_\A^{(2)}$ is closed under multiplication).  
This identity, which is also useful later, is a consequence of the following division of the sum
\[ \sum_{0<m_1<\cdots <m_r<p}\= \sum_{i=0}^r\ \sum_{0<m_1<\cdots<m_i<p/2<m_{i+1}<\cdots <m_r<p} \]
 in the definition and the change $m_j\to p-m_j$ for $j=i+1,\ldots,r$.
\end{proof}

\begin{rem}  This is reminiscent of the definition of the ``symmetric multiple zeta values,'' a conjectural real counterpart
of finite multiple zeta values. See \cite{Kan19, KZ} for the details on this.  
\end{rem}

Also from the numerical experiments, we surmise 

\begin{conj}\label{conj2}
i) If all $k_i$ are greater than 1,  each $\hh(k_1,\ldots,k_r)$ is in $\ZZ_\A^{(1)}$.

ii)  The set $\{ \hh(k_1,\ldots, k_r)\,\mid\, r\ge1,\,\forall k_i: \text{odd} \ge3\}$ constitutes a basis of $\ZZ_\A^{(1)}$.
\end{conj}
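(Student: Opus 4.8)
The plan is to establish (i) first and then to reduce (ii) to it, using a dimension count and Conjecture~\ref{conj1}. For (i) I would induct on the depth $r$ and invert the identity~\eqref{key}. Separating off the two extreme terms $i=r$ and $i=0$ of \eqref{key} gives
\[ \z_\A(k_1,\ldots,k_r) \= \hh(k_1,\ldots,k_r) + (-1)^{k_1+\cdots+k_r}\hh(k_r,\ldots,k_1) + \sum_{i=1}^{r-1}(-1)^{k_{i+1}+\cdots+k_r}\hh(k_1,\ldots,k_i)\,\hh(k_r,\ldots,k_{i+1}). \]
Every summand in the middle sum is a product of two level-two values of strictly smaller depth, each with all entries $\ge2$; by the induction hypothesis these factors lie in $\ZZ_\A^{(1)}$, and since $\ZZ_\A^{(1)}$ is a stuffle subalgebra of $\A$ their products do too. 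As $\z_\A(k_1,\ldots,k_r)\in\ZZ_\A^{(1)}$ trivially, I obtain
\[ \hh(k_1,\ldots,k_r) \;\equiv\; -(-1)^{k_1+\cdots+k_r}\,\hh(k_r,\ldots,k_1) \pmod{\ZZ_\A^{(1)}}, \]
so (i) comes down to controlling a single $\hh$ against its reversal modulo $\ZZ_\A^{(1)}$.

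The decisive missing ingredient is a second, \emph{independent} congruence between $\hh(k_1,\ldots,k_r)$ and $\hh(k_r,\ldots,k_1)$ modulo $\ZZ_\A^{(1)}$ — a parity-type reduction of the kind announced in the abstract. To produce it I would expand $\hh$ into finite Euler sums using the presentations \eqref{even} and \eqref{odd}: writing the even sum as $2^{-r}\sum_{S}\prod_{i\in S}(-1)^{n_i}$ exhibits $\hh(k_1,\ldots,k_r)$ as a $\Q$-combination of finite Euler sums indexed by subsets $S\subseteq\{1,\ldots,r\}$, after which I would push the shuffle/stuffle and reversal relations of $\E$ through this expansion to extract, for all $k_i\ge2$, a relation $\hh(k_1,\ldots,k_r)\equiv c\,\hh(k_r,\ldots,k_1)$ with $c\ne-(-1)^{k_1+\cdots+k_r}$. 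Any such relation, combined with the congruence of the previous paragraph, forces $\hh(k_1,\ldots,k_r)\in\ZZ_\A^{(1)}$ and closes the induction (the depth-one base cases $\hh(k)$, $k\ge2$, are included: $\hh(k)=0$ for even $k$ because $\z_\A(k)=0$, while odd $k$ already needs this parity input). This is where I expect the real difficulty to concentrate: the classical parity phenomenon for Euler sums is governed by the parity of weight-plus-depth, which the hypothesis $k_i\ge2$ leaves entirely free, so a genuinely uniform reduction covering every admissible index is required, and establishing it unconditionally is far from routine.

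For (ii), membership of each $\hh(k_1,\ldots,k_r)$ with all $k_i$ odd and $\ge3$ in $\ZZ_\A^{(1)}$ is the special case of (i); it then remains to prove spanning and linear independence. The count is encouraging: indices with all parts odd and $\ge3$ and total weight $k$ have generating function $(1-x^2)/(1-x^2-x^3)$, which agrees term-by-term with the conjectural value of $\dim_\Q\ZZ_{\A,k}^{(1)}$. I would prove spanning by using \eqref{key} and the stuffle product to rewrite every product of level-two values occurring there in terms of the odd-$\ge3$ generators, that is, by descending from the conjectural odd-$\ge1$ basis of $\ZZ_\A^{(2)}=\E$ in Conjecture~\ref{conj1} to its subspace $\ZZ_\A^{(1)}$, in analogy with the passage to multiple zeta values surviving modulo $\z(2)$ in the Kaneko--Zagier correspondence. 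The true obstacle is linear independence: unconditional lower bounds for dimensions of subspaces of $\A$ are at present out of reach — they would entail currently open independence statements — so I expect the honest outcome to be a conditional proof of (ii), reducing it to Conjecture~\ref{conj1} and to the standard dimension conjecture for ordinary finite multiple zeta values, with the independence itself remaining the essential open point.
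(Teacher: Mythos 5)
You should first be clear about the status of the statement: in the paper this is a \emph{conjecture}, offered on the strength of numerical experiments, and the authors give no proof of either part. So there is no argument of theirs to compare yours against, and a complete proof would be a new theorem. Your proposal, as you yourself say at two separate points, is not such a proof. The parts of it that are actual deductions are correct: splitting off the $i=0$ and $i=r$ terms of \eqref{key} and using that $\ZZ_\A^{(1)}$ is a stuffle subalgebra does give, under the depth induction hypothesis, the congruence $\hh(k_1,\ldots,k_r)\equiv-(-1)^{k}\hh(k_r,\ldots,k_1)\pmod{\ZZ_\A^{(1)}}$ ($k$ the weight), and your count of indices with all parts odd and $\ge3$ does match the conjectural dimension $d_{k-3}$ of $\ZZ_{\A,k}^{(1)}$ (both have generating function $x^3/(1-x^2-x^3)$; your extra factor $1-x^2$ in the numerator just adds the weight-$0$ term). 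But the ``decisive missing ingredient'' you name is genuinely missing, and the route you sketch for it cannot supply it. The only tool of that kind in the paper is the parity identity \eqref{parity}; running your same induction through it (all entries $\ge2$, so the lower-depth $\hhs$ factors are covered by the hypothesis) yields $\hh(\bk)\equiv(-1)^{k+r}\hh(\bk)\pmod{\ZZ_\A^{(1)}}$, which is vacuous exactly when $k\equiv r\pmod 2$ --- this is the content of the Remark following \eqref{parity}, and it is governed by weight-plus-depth parity, the very invariant your hypothesis $k_i\ge2$ does not constrain. Expanding $\hh$ into finite Euler sums via \eqref{even}--\eqref{odd} and applying reversal there hits the identical obstruction, since the parity phenomenon for Euler sums has the same shape. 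So half of the cases of (i) are untouched by every relation at your disposal, and no relation $\hh(\bk)\equiv c\,\hh(\overline{\bk})$ with $c\ne-(-1)^k$ is produced. (A small correction on your base case: for odd $k\ge3$ the membership $\hh(k)\in\ZZ_\A^{(1)}$ does not need any parity input --- it follows from Proposition~\ref{lowdep}~i) together with the known depth-two evaluation $\z_\A(k_1,k_2)\in\Q\cdot Z(k_1+k_2)$ from \cite{Hof15, Zh08}, which exhibits $Z(k)$ as a finite multiple zeta value up to a nonzero rational factor.)

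For (ii) the situation is worse than ``conditional'': linear independence of specified elements of $\A$ is currently unprovable in essentially every nontrivial case (already the nonvanishing of a single $Z(k)$ encodes a statement about irregular primes), and the spanning half would require (i) together with an effective version of Conjecture~\ref{conj1}, neither of which is available. Your proposal is a reasonable and honest map of where the difficulties lie, and its partial reductions are sound, but it does not prove the statement; the statement remains open, exactly as the paper presents it.
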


\begin{rem}  The conjectural dimension of $\ZZ_\A^{(1)}$ is given by the sequence $d_{k-3}$
defined recursively by $d_k=d_{k-2}+d_{k-3},\ d_0=1, d_1=0, d_2=1$.  (cf. \cite{Kan19, Zh}).  
The cardinality of the set in ii) above  equals  $d_{k-3}$.
\end{rem}

\section{Examples in low depths and a parity result}

First  we define two specific elements $L(2)$ and $Z(k)\ (k\ge2)$ in $\A$ as
\[ L(2)_{(p)} := \frac{2^{p-1}-1}{p} \bmod p\quad\text{and}\quad Z(k)_{(p)}:=\frac{B_{p-k}}{k} \bmod p, \]
where $B_n$ denotes the $n$th Bernoulli number.  Note that, by the definition of $\A$, we may ignore
possible (finitely many) $p$'s such that the right-hand sides are not well defined.  These elements are respectively 
a natural analogue of $\log2$ and the conjectural ``true'' analogue of $\zeta(k)\bmod \pi^2$ in $\A$.  We refer the reader to \cite{KZ}
for more details on these.  We first recall known formulas for depth (the length of the index) less than or equal to 2 (\cite[Th.~5.2]{Sun00}, \cite[Lem.~1]{PPT12}).
We give proofs for the convenience of the reader.

\begin{prop}[\cite{Sun00}, \cite{PPT12}] \label{lowdep}
i) $\hh(1)=-2L(2)$ and $\hh(k)=(2-2^{k})Z(k)$ for $k\ge2$.

In particular, $\hh(k)=0$ if $k$ is even.

ii) If $k_{1}+k_{2}$ is odd,
\[
\hh(k_{1},k_{2})=\frac{1}{2}\left\{ (-1)^{k_{2}}\binom{k_{1}+k_{2}}{k_{2}}+2^{k_{1}+k_{2}}-2\right\} Z\left(k_{1}+k_{2}\right).
\]

\end{prop}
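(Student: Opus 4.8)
The plan is to reduce both parts to classical power-sum congruences modulo $p$. For the single values in i), I would first note that pairing $m$ with $p-m$ in $\z_\A(k)_{(p)}=\sum_{0<m<p}m^{-k}$ gives $\z_\A(k)=(1+(-1)^k)\hh(k)$; since $m^{-k}\equiv m^{p-1-k}$ and $\sum_{m=1}^{p-1}m^{p-1-k}\equiv0$ for $1\le k\le p-2$, we have $\z_\A(k)=0$, whence $\hh(k)=0$ for even $k$. For the evaluation itself (with $k\ge2$) I would start from $\hh(k)_{(p)}=\sum_{m=1}^{(p-1)/2}m^{-k}\equiv\sum_{m=1}^{(p-1)/2}m^{\,p-1-k}$ and apply the Bernoulli-polynomial summation $\sum_{m=0}^{N-1}m^{j}=(B_{j+1}(N)-B_{j+1})/(j+1)$ with $j=p-1-k$ and $N=(p+1)/2$. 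Because $(p+1)/2\equiv 2^{-1}\pmod p$ and the $B_i$ with $0\le i\le p-k$ are $p$-integral when $k\ge2$, one gets $B_{j+1}((p+1)/2)\equiv B_{j+1}(1/2)\pmod p$, and the classical identity $B_n(1/2)=(2^{1-n}-1)B_n$ together with $2^{p-1}\equiv1$ and $j+1=p-k\equiv-k$ turns the sum into $(2-2^{k})B_{p-k}/k=(2-2^{k})Z(k)$. The value $k=1$ is excluded here because $B_{p-1}$ is not $p$-integral; for it I would use $\hh(1)_{(p)}=-\sum_{i=1}^{p-1}(-1)^{i-1}/i$ (which again uses $\z_\A(1)=0$) and the congruence $\sum_{i=1}^{p-1}(-1)^{i-1}/i\equiv2q_p(2)\pmod p$, $q_p(2)=(2^{p-1}-1)/p$, coming from $2^{p}-2=\sum_{i=1}^{p-1}\binom{p}{i}$ and $\binom{p}{i}\equiv(-1)^{i-1}p/i\pmod{p^{2}}$, giving $\hh(1)=-2L(2)$.

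For ii), the point is that $k_1+k_2$ odd forces one of $k_1,k_2$ to be even, so by i) the corresponding single value, and hence the product $\hh(k_1)\hh(k_2)$, vanishes. I would then extract two linear relations in the unknowns $\hh(k_1,k_2)$ and $\hh(k_2,k_1)$. The harmonic product $\hh(k_1)\hh(k_2)=\hh(k_1,k_2)+\hh(k_2,k_1)+\hh(k_1+k_2)$ collapses, via i), to $\hh(k_1,k_2)+\hh(k_2,k_1)=(2^{k_1+k_2}-2)Z(k_1+k_2)$. The depth-two instance of \eqref{key}, namely $\z_\A(k_1,k_2)=\hh(k_1,k_2)+(-1)^{k_2}\hh(k_1)\hh(k_2)+(-1)^{k_1+k_2}\hh(k_2,k_1)$, reduces with $\hh(k_1)\hh(k_2)=0$ and $(-1)^{k_1+k_2}=-1$ to $\z_\A(k_1,k_2)=\hh(k_1,k_2)-\hh(k_2,k_1)$. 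Feeding in the evaluation $\z_\A(k_1,k_2)=(-1)^{k_2}\binom{k_1+k_2}{k_2}Z(k_1+k_2)$ of the ordinary finite double zeta value and adding the two relations yields the asserted formula after dividing by $2$.

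The main obstacle is this input evaluation of $\z_\A(k_1,k_2)$ in odd weight. Rather than merely cite it, I would derive it by the same technique: replace the inner sum over $j$ using $\z_\A(k_2)=0$, expand $\sum_{0<j<i<p}i^{\,p-1-k_1}j^{\,p-1-k_2}$ by the Bernoulli-polynomial formula, and observe that exactly one exponent becomes $\equiv0\pmod{p-1}$, so that only the residue $\sum_{i=1}^{p-1}i^{\,p-1}\equiv-1$ contributes. The binomial then emerges from $\binom{p-k_2}{k_1}\equiv(-1)^{k_1}\binom{k_1+k_2-1}{k_1}\pmod p$ and the identity $\tfrac1{k_2}\binom{k_1+k_2-1}{k_1}=\tfrac1{k_1+k_2}\binom{k_1+k_2}{k_1}$, while the overall sign rearranges to $(-1)^{k_2}$ precisely because $k_1+k_2$ is odd. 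Keeping careful track of $p$-integrality and of which power sums actually vanish is the delicate part of the whole argument.
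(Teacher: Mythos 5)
Your argument is correct, and it splits into two cases as far as comparison with the paper goes. For part i) you follow the paper's proof essentially verbatim: the same Seki--Bernoulli expansion of $\sum_{0<m<p/2}m^{p-1-k}$, the same identity $B_n(1/2)=(2^{1-n}-1)B_n$, and the same binomial-theorem computation of $(2^p-2)/p$ for $k=1$; the only cosmetic difference is that you deduce $\hh(k)=0$ for even $k$ from $\z_\A(k)=(1+(-1)^k)\hh(k)$ and $\z_\A(k)=0$ rather than from the vanishing of $B_{p-k}$. For part ii), however, you take a genuinely different and more self-contained route. The paper defers ii) to Example~\ref{ex}, obtaining it as the $r=2$ specialization of the parity identity \eqref{parity}, which in turn requires the star values $\hhs$ and the antipode identity \eqref{key2}. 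You stay entirely at depth $\le 2$: the stuffle relation $\hh(k_1)\hh(k_2)=\hh(k_1,k_2)+\hh(k_2,k_1)+\hh(k_1+k_2)$, whose left-hand side vanishes by i) because odd weight forces one even entry, gives the symmetric combination, while the $r=2$ case of \eqref{key} gives the antisymmetric one, $\hh(k_1,k_2)-\hh(k_2,k_1)=\z_\A(k_1,k_2)$; adding and inserting the Hoffman--Zhao evaluation of $\z_\A(k_1,k_2)$ yields the formula. The resulting relation $2\hh(k_1,k_2)=\z_\A(k_1,k_2)-\hh(k_1+k_2)$ coincides with the paper's (via the reversal $\z_\A(k_2,k_1)=-\z_\A(k_1,k_2)$ in odd weight), so your derivation buys elementarity at the cost of not producing the all-depth parity statement the paper reuses later. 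You also sketch a proof of the input $\z_\A(k_1,k_2)=(-1)^{k_2}\binom{k_1+k_2}{k_2}Z(k_1+k_2)$, which the paper merely cites; the sketch (exactly one index in the Bernoulli expansion makes the power sum $\sum_n n^e$ survive mod $p$) is the standard sound argument, but watch the bookkeeping: as written you attach the exponents $k_1,k_2$ to the wrong variables relative to the convention $\z_\A(k_1,k_2)=\sum_{0<m_1<m_2<p}m_1^{-k_1}m_2^{-k_2}$, which is harmless for the final binomial only because $\binom{k}{k_1}=\binom{k}{k_2}$ but does affect the sign $(-1)^{k_1}$ versus $(-1)^{k_2}$ in odd weight.
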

\begin{proof}
i)  From the computation using the binomial formula, we have
\[ 2L(2)_{(p)} \= \frac{(1+1)^p-2}p \bmod p \= \sum_{i=1}^{p-1}\frac{(-1)^{i-1}}i \bmod p. \]
This is equal to  $-\hh(1)$ because
\[ \sum_{0<i<p\atop i:\text{even}}\frac{1}i \, \bmod p \= \frac12 \hh(1)_{(p)} \quad \text{and} \quad
\sum_{0<i<p\atop i:\text{odd}}\frac{1}i \, \bmod p \= -\frac12 \hh(1)_{(p)}, \]  as seen from \eqref{even} and \eqref{odd}.

For the second equality, we use the Seki-Bernoulli formula for sum of powers (cf. \cite{AIK14}). 
We start with 
\[ \hh(k)_{(p)}\= \sum_{0<m<p/2}\frac1{m^k}\, \bmod p \= \sum_{0<m<p/2} m^{p-1-k} \,\bmod p. \]
The last sum, for large enough $p$, is equal to $\bigl(B_{p-k}\bigl(\frac{p+1}2\bigr)-B_{p-k}(1)\bigr)/(p-k)$,
where $B_n(x)$ denotes the Bernoulli polynomial (cf. \cite[Rem. 4.10]{AIK14}). From the formula
$B_n(1/2)=(2^{1-n}-1)B_n$ (easily derived from the distribution relation \cite[Prop. 4.9 (7)]{AIK14} for the case $k=2$),
we see that this quantity is congruent modulo $p$ to $(2-2^k)B_{p-k}/k$, the result follows. When $k$ is even, $B_{p-k}=0$
for almost all $p$ and thus $\hh(k)=0$.  

We prove ii) in Example~\ref{ex} after we present a general identity \eqref{parity}.
\end{proof}
The equality in ii) above may be viewed as an analogue of the ``parity result'' (\cite{IKZ06, Tsu04}) in the case of depth 2.  
In the next proposition we present a general identity from which one can obtain (a kind of)
a general parity result.  To state the proposition, we introduce the ``star'' variant
$\hhs(k_{1},\ldots,k_{r})$ defined similarly as $\hh(k_{1},\ldots,k_{r})$ but the summation is over 
$0<m_{1}\leq\cdots\leq m_{r}<p/2$ rather than with the strict inequalities.

\begin{prop}  For any $r\ge1$ and $k_i\ge1$, we have
\begin{equation}\label{parity}
\hh(k_{1},\dots,k_{r})=(-1)^{r+k_{1}+\cdots+k_{r}}\sum_{i=0}^{r}(-1)^{i}\zeta_{\mathcal{A}}(k_{i},\dots,k_{1})\hhs(k_{i+1},\dots,k_{r}).
\end{equation}
\end{prop}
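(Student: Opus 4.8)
The plan is to collapse the right-hand side of \eqref{parity} to $\hh(k_1,\ldots,k_r)$ by first linearizing the finite multiple zeta values and then isolating a single vanishing sum. First I would substitute the splitting identity \eqref{key}, applied to the reversed index $(k_i,\dots,k_1)$, into each factor $\z_\A(k_i,\dots,k_1)$; since \eqref{key} holds for every index this needs no separate reversal symmetry. Writing $k=k_1+\cdots+k_r$ and $w_j=k_1+\cdots+k_j$, a short reindexing turns the right-hand side into a sum over the block decompositions $0\le j\le i\le r$,
\[ (-1)^{r+k}\sum_{0\le j\le i\le r}(-1)^{i+w_j}\,\hh(k_1,\dots,k_j)\,\hh(k_i,\dots,k_{j+1})\,\hhs(k_{i+1},\dots,k_r), \]
where the three consecutive blocks are the prefix $(k_1,\dots,k_j)$, the reversed middle block $(k_i,\dots,k_{j+1})$, and the star suffix $(k_{i+1},\dots,k_r)$.

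Next I would fix the prefix length $j$ and perform the inner summation over $i$. Setting $a_\ell=k_{j+\ell}$ and $s=i-j$, that inner sum equals $(-1)^{w_j+j}\hh(k_1,\dots,k_j)\,\Phi(k_{j+1},\dots,k_r)$, where for an index $(a_1,\dots,a_n)$ I define
\[ \Phi(a_1,\dots,a_n)\=\sum_{s=0}^{n}(-1)^{s}\,\hh(a_s,\dots,a_1)\,\hhs(a_{s+1},\dots,a_n). \]
Everything then hinges on the key lemma that $\Phi(a_1,\dots,a_n)=0$ for $n\ge1$, while $\Phi(\emptyset)=1$. Granting this, only the term $j=r$ survives, the displayed double sum collapses to $(-1)^{r+k}(-1)^{w_r+r}\hh(k_1,\dots,k_r)=(-1)^{r+k}\cdot(-1)^{r+k}\hh(k_1,\dots,k_r)$, and the two signs cancel to give exactly \eqref{parity}.

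The heart of the argument, and the step I expect to be the main obstacle, is the vanishing of $\Phi$. I would prove it by merging the two independent half-range sums into one sum: relabelling the first factor $\hh(a_s,\dots,a_1)$ as a decreasing chain $v_1>\cdots>v_s$ and the second factor $\hhs(a_{s+1},\dots,a_n)$ as a weakly increasing chain $v_{s+1}\le\cdots\le v_n$, both ranging over $(0,p/2)$ with no constraint across the junction, yields
\[ \Phi(a_1,\dots,a_n)_{(p)}\=\sum_{(v_1,\dots,v_n)}\Big(\sum_{s\ \mathrm{valid}}(-1)^{s}\Big)\,v_1^{-a_1}\cdots v_n^{-a_n}\ \bmod p, \]
where $s$ is \emph{valid} for a fixed tuple when $v_1>\cdots>v_s$ and $v_{s+1}\le\cdots\le v_n$. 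The crux is then purely combinatorial: for each tuple the set of valid split points is either empty or a pair of consecutive integers $\{s_0,s_0+1\}$. Two valid indices can never differ by $2$ or more, since comparing $v_{s+1}$ with $v_{s+2}$ would force both $v_{s+1}\le v_{s+2}$ and $v_{s+1}>v_{s+2}$; conversely a single valid $s$ always admits a valid neighbour, obtained by moving the junction one step in the direction dictated by whether $v_s>v_{s+1}$ (push right) or $v_s\le v_{s+1}$ (push left), the endpoints $s=0$ and $s=n$ being handled directly. Each nonempty block therefore contributes $(-1)^{s_0}+(-1)^{s_0+1}=0$, so every monomial has vanishing coefficient and $\Phi=0$ for $n\ge1$. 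The only remaining care is the sign bookkeeping in the reduction and the clean treatment of these boundary split points.
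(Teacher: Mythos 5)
Your proposal is correct and follows the same route as the paper's proof: expand each $\z_\A(k_i,\dots,k_1)$ via \eqref{key}, interchange the two summations, and collapse the inner sum over the split point with the antipode identity --- your statement $\Phi(a_1,\dots,a_n)=\delta_{n,0}$ is exactly the paper's \eqref{key2}, and applying \eqref{key} directly to the reversed index rather than first invoking the reversal formula is only a cosmetic difference, since it produces the identical double sum over $0\le j\le i\le r$. The one substantive addition is that you prove the antipode identity from scratch, where the paper merely cites it as a consequence of the harmonic algebra structure (Hoffman); your merge-the-two-chains argument is a correct, self-contained combinatorial proof: two valid split points can never differ by $2$ or more, every valid split point has a valid neighbour (including at $s=0$ and $s=n$), so the valid set for each monomial is empty or a consecutive pair contributing $(-1)^{s_0}+(-1)^{s_0+1}=0$. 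The sign bookkeeping in your reduction also checks out, with only the $j=r$ term surviving and the two factors of $(-1)^{r+k_1+\cdots+k_r}$ cancelling.
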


\begin{proof}
We use \eqref{key} and the ``antipode identity'' (a consequence of the harmonic algebra structure, see \cite{Hof15})
\begin{equation}
\sum_{j=0}^{r}(-1)^{j}\hh(k_{j},\dots,k_{1})\hhs(k_{j+1},\dots,k_{r})=\delta_{r,0},\label{key2}
\end{equation}
where $\delta_{r,0}$ is 0 if $r>0$ and 1 if $r=0$.  By using these and the reversal formula
$\zeta_{\mathcal{A}}(k_{i},\dots,k_{1}) = (-1)^{k_{1}+\cdots+k_{i}}\zeta_{\mathcal{A}}(k_{1},\dots,k_{i})$, 
we start with the sum on the right and proceed as
\begin{align*}
 & \sum_{i=0}^{r}(-1)^{i}\zeta_{\mathcal{A}}(k_{i},\dots,k_{1})\hhs(k_{i+1},\dots,k_{r})\\
= & \sum_{i=0}^{r}(-1)^{i+k_{1}+\cdots+k_{i}}\zeta_{\mathcal{A}}(k_{1},\dots,k_{i})\hhs(k_{i+1},\dots,k_{r})\\
= & \sum_{i=0}^{r}(-1)^{i+k_{1}+\cdots+k_{i}}\sum_{j=0}^{i}(-1)^{k_{j+1}+\cdots+k_{i}}\hh(k_{1},\dots,k_{j})\hh(k_{i},\dots,k_{j+1})\hhs(k_{i+1},\dots,k_{r})\\
= & \sum_{j=0}^{r}\sum_{i=j}^{r}(-1)^{i+k_{1}+\cdots+k_{j}}\hh(k_{i},\dots,k_{j+1})\hhs(k_{i+1},\dots,k_{r})\hh(k_{1},\dots,k_{j})\\
= & (-1)^{r+k_{1}+\cdots+k_{r}}\hh(k_{1},\dots,k_{r}).
\end{align*}
\end{proof}

\begin{rem}
If $k_{1}+\cdots+k_{r}\not\equiv r \bmod2$, we obtain from \eqref{parity}
\[
\hh(k_{1},\dots,k_{r})+\hhs(k_{1},\dots,k_{r})=-\sum_{i=1}^{r}(-1)^{i}\zeta_{\mathcal{A}}(k_{i},\dots,k_{1})\hhs(k_{i+1},\dots,k_{r}).
\]
Noting $\hh(k_{1},\dots,k_{r})+\hhs(k_{1},\dots,k_{r})=2\hh(k_{1},\dots,k_{r})+ \sum\hh(\text{lower depth})$, we conclude 
that if the weight and the depth have a different parity, $\hh(k_{1},\dots,k_{r})$ is written as a sum of products of  $\hh$'s of lower depths
and $\z_\A(k_1,\ldots,k_r)$. If we view the depth of $\z_\A(k_1,\ldots,k_r)$ as $r-1$ (this is reasonable in light of our ``main conjecture''
in \cite{KZ}), this gives a kind of parity result for finite multiple zeta values of level~2, although it has the term $\z_\A(k_1,\ldots,k_r)$ of level one.
\end{rem}

\begin{ex}\label{ex} i)  When $r=2$, the identity \eqref{parity} becomes
\[ \hh(k_1,k_2)\= (-1)^{k_1+k_2}\bigl(\hhs(k_1,k_2)-\z_\A(k_1)\hhs(k_2)+\z_\A(k_2,k_1)\bigr),\]
and this is equal to 
\[  (-1)^{k_1+k_2}\bigl(\hh(k_1,k_2)+\hh(k_1+k_2)+\z_\A(k_2,k_1)\bigr) \]
because $\z_\A(k_1)=0$ for any $k_1$ (see \cite{Hof15, Zh08}, also \cite{Kan19, KZ}). 
If $k_1+k_2$ is odd, we see from this that
\[ \hh(k_1,k_2)\= -\frac12\bigl(\hh(k_1+k_2)+\z_\A(k_2,k_1)\bigr). \]
Then Proposition~\ref{lowdep} ii) follows from Proposition~\ref{lowdep} i) and a formula for 
$\z_\A(k_2,k_1)$ in \cite{Hof15, Zh08, Kan19, KZ}.

ii)  The case $r=3$ of \eqref{parity} reads (we set $k=k_{1}+k_{2}+k_{3}$ and use $\z_\A(k_1)=0$)
\[ \hh(k_{1},k_{2},k_{3}) = (-1)^{k-1}\bigl(\hhs(k_1,k_2,k_3)+\z_\A(k_2,k_1)\hhs(k_3)-\z_\A(k_3,k_2,k_1)\bigr). \]
If $k$ is even, we have, by writing $\hhs(k_1,k_2,k_3)$ as a sum of $\hh$ in a usual way, 
\begin{align*}
&\hh(k_{1},k_{2},k_{3})\\
&=\frac{1}{2}\bigl( \zeta_{\mathcal{A}}(k_{1},k_{2},k_{3})-\hh(k_{1}+k_{2},k_{3})-\hh(k_{1},k_{2}+k_{3})+\zeta_{\mathcal{A}}(k_{1},k_{2})\hh(k_{3})\bigr).
\end{align*}
\end{ex}

\section{Sum formulas}

In this section, we present various sum formulas. First, we establish formulas for  
\[ S(k,r):=  \sum_{k_1+\cdots+k_r=k}\hh(k_1,\ldots,k_r) \quad\text{and}\quad 
S_{1}(k,r):=  \sum_{k_1+\cdots+k_r=k\atop \forall k_i\ge2}\hh(k_1,\ldots,k_r) , \]
writing them as linear combinations of conjectural basis elements given in Cojectures~\ref{conj1} and \ref{conj2}
respectively.  
Set
\[  B(k,r):=  \sum_{k_1+\cdots+k_r=k\atop \forall k_i:\text{odd} \ge1}\hh(k_1,\ldots,k_r) \quad\text{and}\quad 
B_{1}(k,r):=  \sum_{k_1+\cdots+k_r=k\atop \forall k_i:\text{odd}\ge3}\hh(k_1,\ldots,k_r).\]

\begin{thm}\label{thm1} For $1\le r\le k$, we have

i)
\[
S(k,r)=(-1)^{k+r}\sum_{\substack{1\leq i\leq r\\
i\equiv k\bmod 2
}
}\binom{\frac{k-i}{2}}{r-i}B(k,i)
\]
and

ii)
\[
S_{1}(k,r)=(-1)^{k+r}\sum_{\substack{1\leq i\leq r\\
i\equiv k\bmod 2
}
}\binom{\frac{k-3i}{2}}{r-i}B_{1}(k,i).
\]
\end{thm}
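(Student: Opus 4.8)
The plan is to prove both parts at once by passing to generating functions in a formal variable $X$ that records the depth and exploiting the representation \eqref{odd}. First I would restate the assertions: multiplying the formula in i) by $X^{r}$ and summing over $r$, and using that $B(k,i)\neq0$ forces $i\equiv k\bmod 2$, one checks that i) is equivalent to the single identity
\begin{equation*}
\sum_{r=1}^{k}S(k,r)\,X^{r}=\sum_{\substack{1\le i\le k\\ i\equiv k\bmod 2}}B(k,i)\,X^{i}(1-X)^{(k-i)/2},
\end{equation*}
the factor $(-1)^{k+r}$ reappearing when one expands $(1-X)^{(k-i)/2}$ and reads off the coefficient of $X^{r}$. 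Since this identity is homogeneous of weight $k$, the common scalar $(-2)^{k}$ supplied by \eqref{odd} cancels, so it suffices to prove it for the reduced sums
\begin{equation*}
s(k,r)_{(p)}:=\sum_{\substack{0<n_{r}<\cdots<n_{1}<p\\ n_{i}\ \mathrm{odd}}}\ \sum_{\substack{k_{1}+\cdots+k_{r}=k\\ k_{i}\ge1}}\frac{1}{n_{1}^{k_{1}}\cdots n_{r}^{k_{r}}}\bmod p,
\end{equation*}
and $b(k,i)$ defined the same way but with the inner exponents $k_{j}$ odd. Part ii) reduces in the identical fashion to the analogue with inner exponents $\ge2$ (for $s_{1}$) and odd $\ge3$ (for $b_{1}$), the binomial argument $(k-i)/2$ being replaced by $(k-3i)/2$.

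Next I would encode these reduced sums as products over the odd residues. Summing the inner series $\sum_{k_{i}\ge1}(z/n_{i})^{k_{i}}=z/(n_{i}-z)$ with a weight variable $z$ turns the decreasing-sequence sum into a product, so that modulo $p$
\begin{equation*}
\mathcal{S}(z,X):=\sum_{k,r}s(k,r)\,X^{r}z^{k}=\prod_{\substack{0<n<p\\ n\ \mathrm{odd}}}\Bigl(1+\frac{Xz}{n-z}\Bigr)=\prod_{\substack{0<n<p\\ n\ \mathrm{odd}}}\frac{n-(1-X)z}{n-z}.
\end{equation*}
Using instead $\sum_{k_{j}\ \mathrm{odd}}(z/n_{j})^{k_{j}}=z n_{j}/(n_{j}^{2}-z^{2})$ gives the companion series $\mathcal{B}(z,X)=\prod_{n\ \mathrm{odd}}(n^{2}+Xzn-z^{2})/(n^{2}-z^{2})$. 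The identity to be proved becomes $\mathcal{S}(z,X)\equiv\mathcal{B}\bigl(\sqrt{1-X}\,z,\,X(1-X)^{-1/2}\bigr)$, where the generic factor of the right-hand side is $(n^{2}+Xzn-(1-X)z^{2})/(n^{2}-(1-X)z^{2})$ with numerator $(n-(1-X)z)(n+z)$; cancelling the common factor $n-(1-X)z$ against $\mathcal{S}$ collapses the quotient of the two sides to
\begin{equation*}
\prod_{\substack{0<n<p\\ n\ \mathrm{odd}}}\frac{n^{2}-z^{2}}{n^{2}-(1-X)z^{2}}.
\end{equation*}
The same mechanism, with the substitution $z\mapsto\sqrt{1-X}\,z$, $X\mapsto X(1-X)^{-3/2}$, applies to the products for $s_{1},b_{1}$ (whose factors are $1+Xz^{2}/(n(n-z))$ and $1+Xz^{3}/(n(n^{2}-z^{2}))$); there the cubic numerator factors as $n^{3}-(1-X)nz^{2}+Xz^{3}=(n+z)(n^{2}-nz+Xz^{2})$ and the quotient collapses to exactly the same product, so both parts rest on one congruence.

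The heart of the argument is to show this last product is $\equiv1$ modulo $z^{p-1}$. Here I would use that, as $n$ runs over the odd numbers in $(0,p)$, the residues $n^{2}\bmod p$ hit each nonzero quadratic residue exactly once: in each pair $\{m,p-m\}$ one member is odd and the other even while $m^{2}\equiv(p-m)^{2}$. Since the nonzero quadratic residues are precisely the roots modulo $p$ of $w^{(p-1)/2}-1$, this gives
\begin{equation*}
\prod_{\substack{0<n<p\\ n\ \mathrm{odd}}}(w-n^{2})\equiv w^{(p-1)/2}-1\pmod p.
\end{equation*}
Taking $w=z^{2}$ and $w=(1-X)z^{2}$, the numerator and denominator of the product equal $\pm(z^{p-1}-1)$ and $\pm\bigl((1-X)^{(p-1)/2}z^{p-1}-1\bigr)$; each is $\equiv\mp1$ modulo $z^{p-1}$, so their ratio is $\equiv1$ modulo $z^{p-1}$. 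Hence $\mathcal{S}$ and the transformed $\mathcal{B}$ agree modulo $z^{p-1}$, so their coefficients of $z^{k}$ coincide for all $k\le p-2$. Fixing the weight $k$ and discarding the finitely many primes $p\le k+1$, this yields the desired equality in $\A$, and comparing coefficients of $X^{r}$ recovers Theorem~\ref{thm1}. I expect the main obstacle to be conceptual rather than computational: the two product series are genuinely \emph{unequal} as power series in $z$, differing exactly in the top coefficient, that of $z^{p-1}$, through the factor $(1-X)^{(p-1)/2}$, and only after truncating modulo $z^{p-1}$—that is, restricting to weights below $p$—do they agree. Justifying this truncation, and checking that the passage from the decreasing-odd-tuple sums to the products is compatible with \eqref{odd} (including the harmless reversal of the order of the $n_{i}$), is where the care is needed; the quadratic-residue evaluation of $\prod_{n\ \mathrm{odd}}(w-n^{2})$ is the one genuinely arithmetic ingredient.
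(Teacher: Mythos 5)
Your argument is correct, but it is genuinely different from the one in the paper. The paper works purely formally in the harmonic algebra $\mathcal{R}$: it refines $S(k,r)$ to sums $S(k,r,a)$ over indices with exactly $a$ even entries, proves a combinatorial stuffle identity expressing $\sum_i [(2i)]*g(k-2i,r,a)$ in terms of $g(k,r,a)$ and $g(k,r+1,a+1)$, and then uses only two properties of $\hh$ — that it satisfies the harmonic product rule and that $\hh(2i)=0$ — to get the two-term recursion $\frac{k-r-a}{2}S(k,r,a)+(a+1)S(k,r+1,a+1)=0$, which it solves down to $a=0$. You instead use the specific mod-$p$ realization \eqref{odd}, package everything into the rational functions $\prod_{n\ \mathrm{odd}}\frac{n-(1-X)z}{n-z}$ and their companions, and reduce both parts to the single congruence $\prod_{n\ \mathrm{odd}}(w-n^2)\equiv w^{(p-1)/2}-1 \pmod p$ via the quadratic-residue count; the factorizations $n^2+Xzn-(1-X)z^2=(n-(1-X)z)(n+z)$ and $n^3-(1-X)nz^2+Xz^3=(n+z)(n^2-nz+Xz^2)$ all check out, as does the truncation modulo $z^{p-1}$ and the disposal of the finitely many primes $p\le k+1$. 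Your route is arguably slicker and makes transparent why i) and ii) are the same identity in disguise, but it is tied to the arithmetic model; the paper's route yields the finer statement $S(k,r,a)=(-1)^a\binom{(k-r+a)/2}{a}B(k,r-a)$ for each fixed $a$, and its lemma applies to any stuffle homomorphism killing the even singletons. One small presentational point: rather than invoking $\sqrt{1-X}$, it is cleaner to observe that $\mathcal{B}$ is a power series in $z^2$ and $Xz$ (and $\mathcal{B}_1$ in $z^2$ and $Xz^3$), so the substitution is simply $z^2\mapsto(1-X)z^2$ with $Xz$ (resp.\ $Xz^3$) held fixed — no fractional powers ever occur.
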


The following is the key lemma to prove Theorem~\ref{thm1}. We consider the $\Q$-vector space 
spanned by formal symbols $[\bk]$ for each index $\bk$, equipped with the algebra structure 
given by the harmonic (stuffle) product $*$. 
For example, $[(2)]*[(3)]=[(2,3)]+[(3,2)]+[(5)]$.  Denote by $\mathcal{R}$ this $\Q$-algebra. This is 
isomorphic to Hoffman's harmonic algebra $\mathfrak{h}^1$ (\cite{Hof97}). For more
details, we refer \cite{Hof97, Kan19}.

\begin{lem} For $1\le r\le k$ and $0\le a\le r$, set 
\[ g(k,r,a):=\sum_{k_1+\cdots+k_r=k\atop\#\,\text{of even }k_i\,=\,a}[(k_1,\ldots,k_r)] \quad\text{and}\quad 
g_1(k,r,a):=\sum_{k_1+\cdots+k_r=k\atop{\#\,\text{of even }k_i\,=\,a\atop \forall k_i\ge2}}[(k_1,\ldots,k_r)]. \]
Then we have the identity

i)
\[
\sum_{i=1}^{\frac{k-r-a}{2}}[(2i)]*g(k-2i,r,a)=\frac{k-r-a}{2}g(k,r,a)+(a+1)g(k,r+1,a+1)
\]
in $\mathcal{R}$ and also

ii)
\[
\sum_{i=1}^{\frac{k-3r+a}{2}}[(2i)]*g_{1}(k-2i,r,a)=\frac{k-3r+a}{2}g_{1}(k,r,a)+(a+1)g_{1}(k,r+1,a+1).
\]
\end{lem}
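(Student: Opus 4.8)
The plan is to expand the left-hand side directly from the definition of the harmonic product and then sort the resulting symbols by depth. Recall that stuffling a depth-one symbol $[(m)]$ into $[(k_1,\ldots,k_r)]$ produces exactly two kinds of terms: the $r+1$ \emph{insertion} terms $[(k_1,\ldots,k_j,m,k_{j+1},\ldots,k_r)]$, obtained by placing $m$ in one of the $r+1$ gaps, and the $r$ \emph{merge} terms $[(k_1,\ldots,k_{j-1},k_j+m,k_{j+1},\ldots,k_r)]$, obtained by adding $m$ to one of the parts. Applying this with $m=2i$ and summing over $i$ and over the compositions indexing $g(k-2i,r,a)$, I would split the total output into its depth-$(r+1)$ insertion part and its depth-$r$ merge part and show that these account for the two terms on the right.

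For the insertion terms, since $2i$ is even, inserting it into a composition of $k-2i$ with $a$ even parts gives a composition of $k$ of depth $r+1$ with exactly $a+1$ even parts, so every insertion term lies in $g(k,r+1,a+1)$. Conversely, a fixed symbol $[(c_1,\ldots,c_{r+1})]\in g(k,r+1,a+1)$ is produced precisely by choosing which of its even parts $c_j$ plays the role of the inserted $2i$ and deleting it; deleting an even part leaves a composition of $k-c_j$ of depth $r$ with $a$ even parts, which indexes $g(k-c_j,r,a)$. There are $a+1$ even parts available, and each corresponding $i=c_j/2$ falls inside the summation range $1\le i\le (k-r-a)/2$, so each such symbol occurs with multiplicity $a+1$, yielding the term $(a+1)g(k,r+1,a+1)$.

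For the merge terms, the key observation is that adding the even number $2i$ to a part preserves its parity, so a merge term is again a composition of $k$ of depth $r$ with exactly $a$ even parts, i.e. it lies in $g(k,r,a)$. The crux — and the step I expect to demand the most care — is the multiplicity count: a fixed $[(d_1,\ldots,d_r)]\in g(k,r,a)$ arises by merging $2i$ into part $d_j$ exactly when the predecessor entry $d_j-2i\ge1$ stays positive, so its multiplicity is $\sum_{j=1}^{r}\#\{i\ge1:2i\le d_j-1\}=\sum_{j=1}^{r}\lfloor (d_j-1)/2\rfloor$. Splitting this sum according to the parity of $d_j$ and using that there are $a$ even and $r-a$ odd parts, I would show it collapses to $\tfrac12\bigl(k-2a-(r-a)\bigr)=\tfrac{k-r-a}{2}$, \emph{independently of the particular composition}. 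This uniformity is precisely what makes the merge contribution equal to $\tfrac{k-r-a}{2}\,g(k,r,a)$, completing i).

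Part ii) follows by running the identical bookkeeping inside the span of compositions with all parts $\ge2$. The only changes are that a merge now requires $d_j-2i\ge2$ rather than $\ge1$, so the multiplicity becomes $\sum_{j=1}^{r}\lfloor (d_j-2)/2\rfloor$, which by the same parity splitting collapses to $\tfrac12\bigl(k-2a-3(r-a)\bigr)=\tfrac{k-3r+a}{2}$; correspondingly the summation range shortens to $1\le i\le (k-3r+a)/2$, matching the minimal weight $3r-a$ of an admissible composition (with $a$ parts equal to $2$ and $r-a$ parts equal to $3$). The insertion count is unchanged at $a+1$, giving $(a+1)g_1(k,r+1,a+1)$. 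I would present i) and ii) in parallel, carrying out the parity-splitting computation once and indicating the single substitution ($1\rightsquigarrow2$ for the minimal part size) that converts one into the other.
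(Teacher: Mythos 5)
Your proposal is correct and follows essentially the same route as the paper: expand the stuffle product $[(2i)]*g(k-2i,r,a)$, separate the depth-$(r+1)$ insertion terms (multiplicity $a+1$, one for each even entry) from the depth-$r$ merge terms, and show the merge multiplicity $\sum_j\lfloor(k_j-1)/2\rfloor$ (resp. $\sum_j\lfloor(k_j-2)/2\rfloor$) collapses by parity to $\frac{k-r-a}{2}$ (resp. $\frac{k-3r+a}{2}$) independently of the composition. Your write-up is in fact slightly more explicit than the paper's about the floor-function count and the verification that the inserted part's half-value stays within the summation range.
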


\begin{proof} i)  If we compute the harmonic product $[(2i)]*g(k-2i,r,a)$, each resulting term has weight $k$, 
depth either $r$ or $r+1$ and the number of even entries $a$ or $a+1$ respectively, i.e., a term appearing 
either in $g(k,r,a)$ or in $g(k,r+1,a+1)$.  For a given $[(k_1,\ldots,k_r)]$ in $g(k,r,a)$, the number of possible
combinations of $i$ and a term in $g(k-2i,r,a)$ which give $[(k_1,\ldots,k_r)]$ in their harmonic product  
is $(k-r-a)/2$, because the depth $r$ and the number $a$ of 
even entries are the same, so the choice is the entry $k_j$ in $[(k_1,\ldots,k_r)]$ which is larger than 2 and
the amount $2i$ such that $(k_1,\ldots,k_j-2i,\ldots,k_r)]$ is still an index ($k_j-2i>0)$.  Such a pair $(k_j,i)$
is unique, and the total number is $(k-(r-a)-2a)/2=(k-r-a)/2$ (there are $r-a$ odd entries and $a$ even entries in $(k_1,\ldots,k_r)$,
and $k-(r-a)-2a$ is the `excess' for possible subtraction of $2i$).
The term $[(k_1,\ldots,k_r)]$ in $g(k,r+1,a+1)$ comes from $[(2i)]*g(k-2i,r,a)$ by inserting $2i$
to a term in $g(k-2i,r,a)$, and so the choice is $a+1$.

The formula in ii) is proved similarly, just by noting the condition that all entries are greater than or equal to 2.
\end{proof}

\noindent{\it Proof of Theorem~\ref{thm1}.}
i)  Set 
\[ S(k,r,a):=\sum_{k_1+\cdots+k_r=k\atop\#\,\text{of even }k_i\,=\,a}\hh(k_1,\ldots,k_r). \]
Since $\hh(2i)=0$ (Proposition~\ref{lowdep} i)) and  $\hh$ obeys the harmonic product rule, one concludes from 
i) of the above lemma that 
\[
\frac{k-r-a}{2}S(k,r,a)+(a+1)S(k,r+1,a+1)\=0.
\]
From this, and noting $S(k,r,0)=B(k,r)$, we have
\begin{align*}
S(k,r,a) & =\left(-\frac{k-r-a+2}{2a}\right)S(k,r-1,a-1)\\
 & =\left(-\frac{k-r-a+2}{2a}\right)\left(-\frac{k-r-a+4}{2(a-1)}\right)S(k,r-2,a-2)\\
 & =\dots\\
 & =\left(-\frac{k-r-a+2}{2a}\right)\left(-\frac{k-r-a+4}{2(a-1)}\right)\cdots\left(-\frac{k-r-a+2a}{2}\right)S(k,r-a,0)\\
  & =(-1)^{a}\binom{\frac{k-r+a}{2}}{a}B(k,r-a).
\end{align*}
Summing up, we obtain i). The proof of ii) is the same and is omitted.  \qed \\

In an attempt to find a sum formula which is more close in form to the classical one (cf. \cite[Ch.~5]{Zh}), 
we discovered experimentally several formulas,  some we could prove and the other conjectural. Some of them 
look strange.
Since we think there still is much to be discovered and our understanding is not mature yet, 
we mention only several of them, give just sketches of proofs, postponing the detailed study in a future publication \cite{Mur22} by the second-named
author.  

\begin{thm}  For $1\le r\le k$ and a fixed $i$, we have
\[
\sum_{k_1+\cdots+k_r=k\atop
k_{i}:\text{odd and } \forall k_{j}:\text{even } (j\neq i)
} \hh(k_1,\ldots,k_r)\= c\,  \hh(k)\quad\text{for some rational constant }c.
\]
\end{thm}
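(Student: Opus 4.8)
The plan is to run an induction on the depth $r$, the engine being a single recursion obtained from the stuffle product together with the vanishing $\hh(2m)=0$ (Proposition~\ref{lowdep}~i)). Note first that the summation range is non-empty only when $k$ is odd, since the index carries one odd and $r-1$ even entries; for even $k$ the asserted identity is the trivial $0=c\cdot\hh(k)$. So assume $k$ odd, and write $T_i(k,r)$ for the sum in the statement. Lifting to the harmonic algebra $\mathcal{R}$, set $h_i(k,r):=\sum[(k_1,\ldots,k_r)]$, the sum over exactly the same indices (weight $k$, with $k_i$ odd and the remaining entries even), so that $\phi(h_i(k,r))=T_i(k,r)$ under the algebra homomorphism $\phi\colon\mathcal{R}\to\A$, $[(\mathbf{k})]\mapsto\hh(\mathbf{k})$ (Proposition~\ref{prop1}). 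The crucial point is that $\phi([(2m)])=\hh(2m)=0$, so $\phi\bigl([(2m)]*X\bigr)=0$ for every $X\in\mathcal{R}$.

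First I would compute $\sum_{m\ge1}[(2m)]*h_i(k-2m,r)$ in $\mathcal{R}$ by the multiplicity bookkeeping used in the proof of the Lemma, but now keeping track of the position of the unique odd entry. Each product contributes terms of depth $r$ (from adding $2m$ to an entry) and of depth $r+1$ (from inserting $2m$); a copy of $2m$ inserted to the right of the odd entry leaves it in place, while one inserted to its left pushes it one step to the right. Counting, exactly as in the Lemma, how often each target symbol is produced gives the $\mathcal{R}$-identity
\[ \sum_{m\ge1}[(2m)]*h_i(k-2m,r)=\tfrac{k-2r+1}{2}\,h_i(k,r)+(r+1-i)\,h_i(k,r+1)+i\,h_{i+1}(k,r+1). \]
Applying $\phi$ and using $\phi([(2m)]*X)=0$ yields the basic recursion
\[ \tfrac{k-2r+1}{2}\,T_i(k,r)+(r+1-i)\,T_i(k,r+1)+i\,T_{i+1}(k,r+1)=0\qquad(1\le i\le r). \]

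Next I would induct on $r$, the claim being that every $T_i(k,r)$ lies in $\Q\cdot\hh(k)$. The base $r=1$ is $T_1(k,1)=\hh(k)$, and $r=2$ is immediate from the explicit depth-two evaluation of Proposition~\ref{lowdep}~ii), which already exhibits each $\hh(k_1,k_2)$ of odd weight as a rational multiple of $\hh(k)=(2-2^k)Z(k)$. Assuming the claim in depth $r$, I would work modulo $\Q\cdot\hh(k)$: the recursion then kills its first term and leaves, for $i=1,\ldots,r$, the bidiagonal relations $(r+1-i)\,T_i(k,r+1)+i\,T_{i+1}(k,r+1)\equiv0$. Their diagonal coefficients $r,r-1,\ldots,1$ are nonzero, so they express $T_1(k,r+1),\dots,T_r(k,r+1)$ as explicit rational multiples of $T_{r+1}(k,r+1)$ modulo $\Q\cdot\hh(k)$; equivalently, the span of the depth-$(r+1)$ sums exceeds $\Q\cdot\hh(k)$ by at most one dimension.

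The hard part is collapsing this last degree of freedom, and I expect it to be the main obstacle. One is tempted to feed in the total sum $\sum_i T_i(k,r+1)$, which the computation in the proof of Theorem~\ref{thm1} (the case of $r$ even entries) evaluates as a rational multiple of $\hh(k)$; but that combination is already a consequence of the bidiagonal relations, so it supplies nothing new. A genuinely independent datum is therefore required, and it suffices to evaluate a single boundary sum, say $T_1(k,r+1)$ (odd entry first), on its own. Using the even-support expression \eqref{even}, $T_1(k,r+1)$ becomes $2^{k}$ times a nested power sum over even residues which, after summing the geometric series over the admissible exponents, should reduce to a Bernoulli-number congruence of exactly the type handled in Proposition~\ref{lowdep} and in \cite{Sun00, PPT12}; this would pin $T_1(k,r+1)$ to $\Q\cdot\hh(k)$ and, through the bidiagonal relations, force every $T_i(k,r+1)\in\Q\cdot\hh(k)$, closing the induction. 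The explicit shape of the resulting constant $c$ is precisely the computation I would defer, in keeping with the statement asserting only the rationality of $c$; this boundary evaluation is where the real work lies, and is presumably carried out in detail in \cite{Mur22}.
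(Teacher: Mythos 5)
Your stuffle recursion
\[
\tfrac{k-2r+1}{2}\,T_i(k,r)+(r+1-i)\,T_i(k,r+1)+i\,T_{i+1}(k,r+1)=0
\]
is correct and is exactly the ``harmonic product'' mechanism the paper has in mind, but the argument as you run it does not close: your upward induction on depth leaves, at each step, one undetermined degree of freedom (all $T_j(k,r+1)$ pinned to $\Q\cdot\hh(k)$ only modulo a single unknown, say $T_{r+1}(k,r+1)$), and the step you propose to resolve it --- a direct Bernoulli-number evaluation of a boundary sum such as $T_1(k,r+1)$ --- is not carried out; you explicitly defer it as ``where the real work lies.'' That deferred step is the entire content of the theorem beyond your (correct but insufficient) linear algebra, so this is a genuine gap. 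The paper's sketch identifies the missing external input: the evaluation $\hh(2,\dots,2,1,2,\dots,2)=\frac{(-1)^{r-1}}{2^{2r-2}}\binom{2r-1}{2i-1}\hh(2r-1)$ from Pilehrood--Pilehrood--Tauraso, which is precisely the kind of closed evaluation your plan lacks.

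The irony is that your own recursion already finishes the proof once you feed in that special case and run the induction in the other direction. Fix $k$ odd and descend in depth: the maximal depth is $R=(k+1)/2$, where each $T_i(k,R)$ is the single term $\hh(2,\dots,2,1,2,\dots,2)\in\Q\cdot\hh(k)$ by the cited formula, and for every $r<R$ the coefficient $\frac{k-2r+1}{2}$ is nonzero, so the recursion expresses $T_i(k,r)$ as an explicit rational combination of $T_i(k,r+1)$ and $T_{i+1}(k,r+1)$ with no residual freedom. Descending from $r=R$ to the desired depth proves $T_i(k,r)\in\Q\cdot\hh(k)$ outright (and in principle computes $c$). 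So the approach is salvageable, but as written the proposal proves only that the $r+1$ sums span at most one extra dimension over $\Q\cdot\hh(k)$, not the theorem itself.
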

\begin{proof}  A special case 
\[
\hh(\underbrace{2,\dots2}_{i-1},1,\underbrace{2,\dots,2}_{r-i})=\frac{(-1)^{r-1}}{2^{2r-2}}\binom{2r-1}{2i-1}\hh(2r-1)
\]
is proved in \cite[Th.~5.4]{PPT}, and we may use this and the harmonic product to establish the theorem.  
We are not able to obtain a general closed formula of the constant $c$. 
\end{proof}
The result \cite[Th.~5.3]{PPT} is also a special case and there the constant is explicit.
The above theorem looks similar to the classical sum formula for multiple zeta values. There are several 
variants like restricted sum formulas or weighted sum formulas (see for instance \cite{Zh}). But the next formulas 
look rather strange and seem similar to none of these.  We introduce one notation.

\begin{defn} For an index $(k_{1},\dots,k_{r})$ of weight $k=k_1+\cdots+k_r$, put
\[
C(k_{1},\dots,k_{r}):=\sum_{j=1}^{r-1}(-1)^{k_{1}+\cdots+k_{j}}\binom{k}{k_{1}+\cdots+k_{j}}.
\]
\end{defn}

Let $\Sigma_n$ be the set of permutations of $\{1,\ldots,n\}$ (the symmetric group of order $n$).
The following is a theorem for the usual (level one) finite multiple zeta values.

\begin{thm}  For a non-empty index $(k_{1},\dots,k_{r})$ of depth $r$ and weight $k$, we have
\begin{align*}
&\sum_{\sigma\in\Sigma_{r}}\left(r+1-2\sigma^{-1}(r)\right)\zeta_{\mathcal{A}}(k_{\sigma(1)},\dots,k_{\sigma(r)})\\
&\=(-1)^{r}\,2\!\sum_{\tau\in\Sigma_{r-1}}C(k_{\tau(1)},\ldots,k_{\tau(r-1)},k_{r})\cdot Z(k).
\end{align*}
\end{thm}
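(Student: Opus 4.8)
The plan is to convert the weighted sum over permutations into a single signed sum over tuples of distinct residues modulo $p$, and then to collapse that sum using the vanishing of power sums. Writing $\z_\A(k_{\sigma(1)},\dots,k_{\sigma(r)})_{(p)}=\sum_{n_1<\cdots<n_r<p}\prod_t n_t^{-k_{\sigma(t)}}$ and substituting $m_{\sigma(t)}=n_t$, one sees that as $\sigma$ runs over $\Sigma_r$ and the $n_t$ run over strictly increasing tuples, the variables $(m_1,\dots,m_r)$ run over all tuples of distinct residues, with $\sigma^{-1}(r)$ equal to the rank of $m_r$. Hence $r+1-2\sigma^{-1}(r)=\#\{j:m_j>m_r\}-\#\{j:m_j<m_r\}$, and the left-hand side becomes
\[ \sum_{i\neq r}\ \sum_{m_1,\dots,m_r\ \mathrm{distinct}}\frac{\mathrm{sgn}(m_i-m_r)}{m_1^{k_1}\cdots m_r^{k_r}}\ \bmod p, \]
where $\mathrm{sgn}(0)=0$. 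So it suffices to evaluate, for each fixed $i\neq r$, the inner signed sum $W_i$ over distinct tuples.

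Next I would apply M\"obius inversion on the partition lattice of $\{1,\dots,r\}$ to the variables of $W_i$, writing it as $\sum_{\pi}\mu(\hat{0},\pi)\tilde f(\pi)$, where $\tilde f(\pi)$ is the analogous sum over tuples that are constant on the blocks of $\pi$ (with no distinctness imposed across blocks) carrying the same weight $\mathrm{sgn}(m_i-m_r)$. The key point is that whenever a block $B$ contains neither $i$ nor $r$, its value is summed freely and contributes the power sum $\sum_{m=1}^{p-1}m^{-k_B}$, which is $\equiv0\bmod p$ since $1\le k_B\le k<p-1$ for all but finitely many $p$; and any block merging $i$ with $r$ contributes $0$ by the convention $\mathrm{sgn}(0)=0$. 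Thus only the two-block partitions $\pi=\{I,R\}$ with $i\in I$, $r\in R$, $I\sqcup R=\{1,\dots,r\}$ survive, and for such a partition $\tilde f(\{I,R\})=\sum_{u\neq v}\mathrm{sgn}(u-v)\,u^{-k_I}v^{-k_R}=\z_\A(k_R,k_I)-\z_\A(k_I,k_R)$, with $k_I=\sum_{j\in I}k_j$ and $k_R=\sum_{j\in R}k_j$.

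The known depth-two evaluation $\z_\A(a,b)=(-1)^b\binom{a+b}{a}Z(a+b)$ (the formula for $\z_\A(k_2,k_1)$ cited in Example~\ref{ex}) then yields $\z_\A(k_R,k_I)-\z_\A(k_I,k_R)=2(-1)^{k_I}\binom{k}{k_I}Z(k)$; this holds for every weight $k$, since for even $k$ both the coefficient $(-1)^{k_I}-(-1)^{k_R}$ and $Z(k)$ vanish, so no parity case split is needed. Inserting $\mu(\hat{0},\{I,R\})=(-1)^{|I|-1}(|I|-1)!\,(-1)^{|R|-1}(|R|-1)!=(-1)^{r}(|I|-1)!(|R|-1)!$ and summing over $i\neq r$ (each partition $\{I,R\}$ with $r\in R$ is counted $|I|$ times, and $|I|\cdot(|I|-1)!=|I|!$, while $r\in R$ forces $I\subseteq\{1,\dots,r-1\}$ and $|R|-1=r-1-|I|$) gives
\[ \sum_{i\neq r}W_i=(-1)^{r}2Z(k)\!\!\sum_{\emptyset\neq I\subseteq\{1,\dots,r-1\}}\!\!|I|!\,(r-1-|I|)!\,(-1)^{k_I}\binom{k}{k_I}. \]

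Finally I would identify this with the right-hand side by expanding $\sum_{\tau\in\Sigma_{r-1}}C(k_{\tau(1)},\dots,k_{\tau(r-1)},k_r)$: grouping the permutations $\tau$ according to the unordered set $S=\{\tau(1),\dots,\tau(j)\}$ of their first $j$ entries, each $j$-subset $S\subseteq\{1,\dots,r-1\}$ arises from exactly $j!(r-1-j)!$ permutations and contributes $(-1)^{k_S}\binom{k}{k_S}$, so that $\sum_{\tau}C=\sum_{\emptyset\neq S\subseteq\{1,\dots,r-1\}}|S|!(r-1-|S|)!(-1)^{k_S}\binom{k}{k_S}$, which is exactly the bracketed sum above. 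The main obstacle is the inclusion--exclusion step: one must verify carefully that the sign weight $\mathrm{sgn}(m_i-m_r)$ is compatible with M\"obius inversion on the partition lattice, that every partition other than the two-block ones is annihilated by the vanishing of power sums, and that the factorial M\"obius coefficients are tracked exactly through to the final reindexing.
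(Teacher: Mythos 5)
Your proof is correct, but it follows a genuinely different route from the paper's. The paper (which only sketches its argument, deferring details to \cite{Mur22}) works in the harmonic algebra $\mathcal{R}$: it establishes an identity expressing $\sum_i [(k_i)]*R(k_1,\dots,\check{k_i},\dots,k_r)$ in terms of $R$'s of the same and lower depth, and then proves the theorem by induction on the depth, with the depth-2 evaluation of $\z_\A(a,b)$ as the base case. You instead avoid induction entirely: you convert the weight $r+1-2\sigma^{-1}(r)$ into the antisymmetric kernel $\sum_{i\ne r}\mathrm{sgn}(m_i-m_r)$ over tuples of distinct residues, apply M\"obius inversion on the partition lattice, and observe that the vanishing of the power sums $\sum_{m=1}^{p-1}m^{-k_B}\bmod p$ kills every partition except the two-block ones separating $i$ from $r$, which reduces the whole identity in one stroke to the depth-2 formula $\z_\A(a,b)=(-1)^b\binom{a+b}{a}Z(a+b)$; the final factorial bookkeeping matching $|I|!\,(r-1-|I|)!$ against the $\tau$-sum of $C$ is exactly right, and your remark that no parity split is needed (both $(-1)^{k_I}-(-1)^{k_R}$ and $Z(k)$ vanish for even $k$) is a nice touch. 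I checked the rank computation, the M\"obius coefficients $(-1)^r(|I|-1)!(|R|-1)!$, the multiplicity $|I|$ from summing over $i$, and the $r=1,2$ degenerate cases; all are consistent. What the paper's approach buys is uniformity: the same lemma in $\mathcal{R}$ drives both this theorem and its level-two companion, whereas your argument exploits the vanishing of full power sums mod $p$ and so does not transfer directly to the level-two case (there $\sum_{0<m<p/2}m^{-k_B}$ is a nonzero multiple of $Z(k_B)$ in general, so the non-two-block partitions would not drop out). What your approach buys is a self-contained, non-inductive proof of the level-one statement with all constants visible at once.
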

And the next is a level-two counterpart.

\begin{thm}
For a non-empty index $(k_{1},\dots,k_{r})$ of depth $r$ and weight $k$ with $k_i$ even for all
$1\le i\le r-1$ and $k_r$ odd, we have
\begin{align*}
&\sum_{\sigma\in\Sigma_{r}}\left(r+1-2\sigma^{-1}(r)\right)\hh(k_{\sigma(1)},\dots,k_{\sigma(r)})\\
&\=(-1)^{r}\sum_{\tau\in\Sigma_{r-1}}C(k_{\tau(1)},\ldots,k_{\tau(r-1)},k_{r})
\cdot Z(k).
\end{align*}
\end{thm}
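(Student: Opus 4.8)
The plan is to derive this level-two sum formula from the preceding level-one theorem, by rewriting the weighted permutation sum of $\z_\A$'s in terms of that of the $\hh$'s via the key identity~\eqref{key}. Write
\[ W := \sum_{\sigma\in\Sigma_r}\bigl(r+1-2\sigma^{-1}(r)\bigr)\,\hh(k_{\sigma(1)},\dots,k_{\sigma(r)}) \]
for the left-hand side we want, and let $W'$ be the analogous sum with $\z_\A$ in place of $\hh$. First I would observe that the parity hypothesis forces the weight $k=k_1+\cdots+k_r$ to be odd. Applying~\eqref{key} to each permuted index $(k_{\sigma(1)},\dots,k_{\sigma(r)})$, the two outermost terms ($i=r$ and $i=0$) are $\hh(k_{\sigma(1)},\dots,k_{\sigma(r)})$ and, since $k$ is odd, $-\hh(k_{\sigma(r)},\dots,k_{\sigma(1)})$. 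The combinatorial key is that composing $\sigma$ with the order-reversing permutation $w(j)=r+1-j$ sends $\sigma^{-1}(r)$ to $r+1-\sigma^{-1}(r)$, hence negates the weight $r+1-2\sigma^{-1}(r)$; re-indexing the $i=0$ contributions by $\sigma\mapsto\sigma w$ then turns them into $+W$ as well. Thus the two outermost terms of~\eqref{key} contribute $2W$ in total, and I obtain $W'=2W+M$, where $M$ is the total weighted contribution of the interior splits $0<i<r$.

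The crux is to show $M=0$. For an interior split $i$ and a permutation $\sigma$, the corresponding summand is a scalar times a product $\hh(k_{\sigma(1)},\dots,k_{\sigma(i)})\,\hh(k_{\sigma(r)},\dots,k_{\sigma(i+1)})$, and by the parity hypothesis exactly one of these two (nonempty) blocks consists entirely of even entries, namely the one not containing the unique odd entry $k_r$. Since the weight $r+1-2\sigma^{-1}(r)$ depends only on the position of $k_r$, it is unaffected by permuting the even entries among themselves inside the all-even block. I would therefore fix the split $i$ and group the contributing permutations by the ordered block containing $k_r$ (which also fixes the content of the all-even block), and sum over all internal orderings of the all-even block. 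This produces, as an inner factor, a fully symmetric sum $\sum_{\rho\in\Sigma_m}\hh(a_{\rho(1)},\dots,a_{\rho(m)})$ over an all-even argument tuple $a_1,\dots,a_m$.

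Such a symmetric sum vanishes. Indeed, from the quasi-shuffle (stuffle) structure of $\mathcal R$ one has
\[ \sum_{\rho\in\Sigma_m}\hh(a_{\rho(1)},\dots,a_{\rho(m)})=\sum_{\pi}\ \prod_{B\in\pi}(-1)^{|B|-1}(|B|-1)!\,\hh\Bigl(\sum_{j\in B}a_j\Bigr), \]
where $\pi$ runs over the set partitions of $\{1,\dots,m\}$. When every $a_j$ is even, each subset sum $\sum_{j\in B}a_j$ is even, so every factor $\hh(\sum_{j\in B}a_j)$ is $0$ by Proposition~\ref{lowdep}~i); hence the whole symmetric sum is $0$. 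Every group therefore contributes $0$, giving $M=0$ and $W'=2W$.

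Finally, I would apply the preceding (level-one) theorem to the same index, which reads $W'=(-1)^r\,2\sum_{\tau\in\Sigma_{r-1}}C(k_{\tau(1)},\dots,k_{\tau(r-1)},k_r)\,Z(k)$, and divide by $2$ to conclude. I expect the genuine difficulty to lie not in the vanishing lemma, which is a clean consequence of $\hh(\text{even})=0$ together with the stuffle product, but in the combinatorial bookkeeping of the second step: carefully matching the reversal symmetry of the weights against the splitting in~\eqref{key}, and verifying that the grouping really does isolate an unconstrained symmetrization of the all-even block so that the vanishing lemma applies verbatim.
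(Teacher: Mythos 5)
Your argument is correct, but it takes a genuinely different route from the paper. The paper proves this theorem and its level-one companion in parallel, by induction on the depth using the lemma on $R(k_1,\ldots,k_r)$ and the harmonic product (base case depth $2$, details deferred to \cite{Mur22}). You instead \emph{reduce} the level-two statement to the level-one one: applying \eqref{key} to each permuted index, the boundary terms $i=0,r$ combine to $2W$ (your reversal computation $\sigma\mapsto\sigma w$ negating the weight $r+1-2\sigma^{-1}(r)$ and cancelling the sign $(-1)^k=-1$ is correct since $k$ is odd), and each interior split isolates, after grouping by the ordered block containing the unique odd entry $k_r$, a full symmetrization of an all-even block; by Hoffman's symmetric sum theorem --- valid here because it is an identity in $(\mathcal{R},*)$ and $\hh$ obeys the harmonic product rule (Proposition~\ref{prop1}) --- that symmetrization is a polynomial in $\hh(\text{even})=0$ (Proposition~\ref{lowdep}~i)), so the interior contribution vanishes and $W'=2W$. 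Dividing the level-one theorem by $2$ gives the claim. What your route buys is a short, conceptual explanation of the factor $2$ discrepancy between the two theorems and of why the parity hypothesis on $(k_1,\ldots,k_r)$ is needed at level two but not at level one; what it costs is that it is conditional on the level-one theorem (which the paper also only sketches), whereas the paper's inductive scheme treats both levels independently and uniformly. As a minor point of bookkeeping worth making explicit if you write this up: for an interior split exactly one block is nonempty and all-even precisely because $1\le i\le r-1$, and both the weight $r+1-2\sigma^{-1}(r)$ and the sign $(-1)^{k_{\sigma(i+1)}+\cdots+k_{\sigma(r)}}$ are constant on each group because they depend only on the position and block-membership of $k_r$.
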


The proofs of both theorems rely on the following lemma.

\begin{lem}  For an index $(k_1,\ldots,k_r)$, set 
\[ R(k_1,\ldots,k_r)=\sum_{\sigma\in\Sigma_r}(r+1-2\sigma^{-1}(r))[(k_{\sigma(1)},\ldots,k_{\sigma(r)})]. \]
Then, we have the identity in $\mathcal{R}$
\begin{align*}
&\sum_{i=1}^{r-1}[(k_{i})]*R(k_{1},\dots,\check{k_{i}},\dots,k_{r}) \\
& =(r-2)R(k_{1},\dots,k_{r})+\sum_{1\leq i\leq r-1}R(k_{1},\dots,\check{k_{i}},\dots,k_{r-1},k_{i}+k_{r})\\
 & +2\!\!\sum_{1\leq i<j\leq r-1}R(k_{i}+k_{j},k_{1},\dots,\check{k_{i}},\dots,\check{k_{j}}\dots,k_{r-1},k_{r}),
\end{align*}
where the wedge $\check{k_{i}}$ means $k_i$ is omitted.
\end{lem}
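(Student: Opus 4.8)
The plan is to compute the left-hand side directly by expanding each harmonic product $[(k_i)]*R(k_1,\dots,\check{k_i},\dots,k_r)$ and tracking where each resulting term of weight $k$ and depth $r$ (for the stuffle-type terms) or depth $r$ with two parts merged (for the contraction-type terms) lands. Recall that for the stuffle product of a single letter $[(k_i)]$ with an index of depth $r-1$, every summand is either a shuffle-style insertion of $k_i$ into one of the $r$ gaps (producing a depth-$r$ index with all original parts intact, hence a term of the shape appearing in $R(k_1,\dots,k_r)$) or a merge of $k_i$ with one adjacent entry (producing a depth-$(r-1)$ index with one part replaced by a sum, hence a term of the shape appearing in the $R(\dots,k_i+k_r)$ or $R(k_i+k_j,\dots)$ families on the right). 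So the entire identity is a bookkeeping statement: I would fix a target monomial $[(\ell_{\pi(1)},\dots,\ell_{\pi(r)})]$ on each side and show the coefficients match, where the subtlety is that the coefficient carried by $R$ is not constant but the position-dependent weight $r+1-2\sigma^{-1}(r)$ attached to each permutation.

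First I would set up precise notation for how $\sigma^{-1}(r)$ — the position occupied by the distinguished last entry $k_r$ — transforms under the three operations: (a) inserting a new letter $k_i$ into a depth-$(r-1)$ arrangement, which may shift the position of $k_r$ by one depending on whether $k_i$ lands before or after it; (b) merging $k_i$ with $k_r$ itself, which keeps the combined entry as the distinguished last-index descendant; and (c) merging $k_i$ with some $k_j$ ($j\neq r$), which reduces the depth and leaves $k_r$'s position possibly shifted. Because $R$ weights each permutation by $r+1-2\sigma^{-1}(r)$, summing over all insertion positions produces a telescoping/averaging of these position weights; the constant $(r-2)$ in front of $R(k_1,\dots,k_r)$ on the right should emerge precisely as $\sum$ over the insertion slots of the weight-shift contributions, and I expect to verify it by a short combinatorial identity of the form $\sum_{\text{slots}}(\text{weight before})=(r-2)\cdot(\text{weight of target})+(\text{boundary corrections})$.

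The key technical step is handling the weight factor $r+1-2\sigma^{-1}(r)$ under insertion of one extra letter. When I insert $k_i$ into a depth-$(r-1)$ index whose distinguished entry sits at position $q$ (with weight $r-2\cdot q$ appropriate to depth $r-1$), the insertion into each of the $r$ available gaps produces depth-$r$ indices; for gaps to the left of $k_r$ the distinguished position becomes $q+1$, and to the right it stays $q$. I would sum $r+1-2(\text{new position})$ over exactly these insertion choices and over which letter $k_i$ is being inserted, and show the aggregate equals $(r-2)$ times $r+1-2(\text{target position})$. This is where the specific linear form of the weight is essential and where I expect the main obstacle to lie: the cross-cancellation between the ``position shifts by $+1$'' and the factor $2$ in front of $\sigma^{-1}(r)$ has to be arranged so that the dependence on $q$ cancels, leaving the uniform constant $(r-2)$. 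I anticipate this reduces to the elementary identity that $\sum_{j=0}^{r-1}\big(r+1-2(q+\mathbf 1[\text{slot}\le \text{position of }k_r])\big)$, summed appropriately over the $r-1$ choices of inserted letter, collapses; getting the boundary terms (the merges producing the $R(\dots,k_i+k_r)$ and $R(k_i+k_j,\dots)$ sums with their coefficients $1$ and $2$) to account for exactly the discrepancy is the delicate part.

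Finally I would assemble the three contributions — the depth-preserving insertions giving $(r-2)R(k_1,\dots,k_r)$, the merges with $k_r$ giving $\sum_{i}R(\dots,k_i+k_r)$, and the merges among $k_1,\dots,k_{r-1}$ giving the factor-$2$ double sum (the $2$ arising because an unordered pair $\{k_i,k_j\}$ can be formed in two ordered ways during the stuffle, or equivalently because either letter can be the one carried in by the product) — and confirm the identity holds termwise in $\mathcal{R}$. Since $\mathcal{R}$ is the free polynomial algebra on the symbols $[\bk]$ with the stuffle product, it suffices to match coefficients of each basis element $[\bk]$, so no convergence or analytic input is needed; the whole proof is a finite combinatorial verification, and the hardest single point remains the cancellation of the $q$-dependence described above.
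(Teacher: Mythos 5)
Your proposal follows essentially the same route the paper indicates (the paper itself only states that the lemma ``is done basically by comparing coefficients of terms on both sides,'' deferring details to \cite{Mur22}): expand each stuffle product into insertion terms and merge terms, then match coefficients of basis elements of $\mathcal{R}$, with the factor $2$ on the double sum arising because either member of the pair $\{k_i,k_j\}$ can be the inserted letter, and the merges with $k_r$ carrying weight $r-2q'$ that agrees with the depth-$(r-1)$ weight of the distinguished entry $k_i+k_r$. The one step you leave as ``anticipated'' does check out: if $q=\sigma^{-1}(r)$ is the position of $k_r$ in a target depth-$r$ word, the $q-1$ insertions from the left and $r-q$ insertions from the right contribute $(q-1)(r-2q+2)+(r-q)(r-2q)=(r-2)(r+1-2q)$, which is exactly $(r-2)$ times the weight attached to that word in $R(k_1,\dots,k_r)$.
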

The proof of the lemma is done basically by comparing coefficients of terms on both sides, though this is a bit tedious.
And the proofs of theorems are by induction on depths, starting point being explicit formulas in the case of depth 2
(Proposition~\ref{lowdep} for level 2 and \cite{Hof15, Zh08}, \cite[Ex. 7.4]{Kan19} for level 1).
The detailed discussion will be given in \cite{Mur22}.

We end this paper by  a conjecture, which may be viewed as a variant of the weighted sum formula but
also strange in form.

\begin{conj}  For $r\ge1$ and $a\ (0\leq a\leq r)$, one has
\[
\sum_{\forall k_{i}\in\{1,2\}\atop \#\{i\mid k_i=2\}=a}\bigl( (-1)^{\#\{i\mid i:odd,k_{i}=2\}}2^{a}-1\bigr)\hh(k_{1},\dots,k_{r})\=0.\quad (\text{The weight is $r+a$}.)
\]
\end{conj}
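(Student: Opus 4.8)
The plan is to first collapse the conjecture to a single polynomial congruence, then settle the extreme cases exactly, and finally attack the general case by induction while flagging the one genuinely resistant feature.

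First I would push everything through the defining sum $\hh(k_1,\dots,k_r)_{(p)}=\sum_{0<m_1<\cdots<m_r<p/2}m_1^{-k_1}\cdots m_r^{-k_r}$. Write $\mathcal I_{r,a}$ for the set of indices with every $k_i\in\{1,2\}$ and exactly $a$ entries equal to $2$; each such index has weight $r+a$. The conjecture is the statement $2^a\sum_{\mathcal I_{r,a}}(-1)^{\#\{i\mid i:odd,\,k_i=2\}}\hh(k_1,\dots,k_r)=\sum_{\mathcal I_{r,a}}\hh(k_1,\dots,k_r)$. Substituting the defining sum, exchanging the two summations, and encoding the choice $k_i\in\{1,2\}$ at each place by a formal variable $t$ marking the entries equal to $2$, both sides become coefficients of $t^a$ in an ordered sum over $0<m_1<\cdots<m_r<p/2$. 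Collecting all $a$ at once, the conjecture is equivalent, for all sufficiently large $p$, to the polynomial congruence
\[ \sum_{0<m_1<\cdots<m_r<p/2}\ \prod_{i:\,odd}\Bigl(\frac1{m_i}-\frac{2t}{m_i^2}\Bigr)\prod_{i:\,even}\Bigl(\frac1{m_i}+\frac{2t}{m_i^2}\Bigr)\ \equiv\ \sum_{0<m_1<\cdots<m_r<p/2}\ \prod_{i=1}^r\Bigl(\frac1{m_i}+\frac{t}{m_i^2}\Bigr)\pmod p. \]
(Equivalently, by the proof of Theorem~\ref{thm1} the unsigned right-hand sum equals $S(r+a,r,a)=(-1)^aB(r+a,r-a)$ and already lies in the conjectural basis, so the whole content is to evaluate the \emph{signed} sum and check it is $2^{-a}$ times the unsigned one.) The virtue of this form is that the two sides differ only in that the left assigns the position-dependent weights $-2t$ at odd places and $+2t$ at even places, while the right is fully symmetric in the $m_i$.

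Second I would dispose of the two extremes. For $a=0$ the single index $(1,\dots,1)$ carries coefficient $2^0-1=0$; for $a=r$ the single index $(2,\dots,2)$ is multiplied by $\hh(\{2\}^r)=0$, which follows from $\hh(\text{even})=0$ (Proposition~\ref{lowdep}) together with the stuffle product (Proposition~\ref{prop1}). The first informative layer is $a=r-1$, i.e.\ exactly one entry equal to $1$: here $\mathcal I_{r,r-1}$ consists of the indices $\hh(\underbrace{2,\dots,2}_{i-1},1,\underbrace{2,\dots,2}_{r-i})=\frac{(-1)^{r-1}}{2^{2r-2}}\binom{2r-1}{2i-1}\hh(2r-1)$ by \cite[Th.~5.4]{PPT}. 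Writing $o=\lceil r/2\rceil$ for the number of odd positions and $\epsilon_i\in\{0,1\}$ for the indicator that $i$ is odd, the conjecture collapses to the numerical identity $\sum_{i=1}^r\bigl(2^{r-1}(-1)^{o-\epsilon_i}-1\bigr)\binom{2r-1}{2i-1}=0$, which follows from $\sum_{i}\binom{2r-1}{2i-1}=2^{2r-2}$ and from the standard roots-of-unity evaluation of $\sum_i(-1)^{\epsilon_i}\binom{2r-1}{2i-1}$, a value of $\operatorname{Im}(1+\sqrt{-1})^{2r-1}$.

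Finally, for the remaining range $1\le a\le r-2$ (two or more ones) I would argue by induction on the number $r-a$ of ones. The idea is to use the stuffle product to express a $\hh$ carrying several $1$'s as a combination of products of $\hh$'s each carrying fewer $1$'s --- ultimately single-$1$ factors, known by \cite[Th.~5.4]{PPT} --- plus strictly lower-depth corrections, and then to check that the prescribed coefficients $2^a(-1)^{\#\{i\mid i:odd,\,k_i=2\}}-1$ force the total to cancel; the transfer-matrix reading of the generating function above, together with the vanishing of the even power sums $\sum_{0<N<p,\ N\,even}N^{-2k}\equiv0\pmod p$, is the natural vehicle for this bookkeeping. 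The hard part, and the reason the statement is still only conjectural, is precisely the position-dependent sign $(-1)^{\#\{i\mid i:odd,\,k_i=2\}}$: it is not a symmetric function of the entries, so it is incompatible with the stuffle product (which permutes the $k_i$), and it is not removed by the reflection $m_i\mapsto (p+1)/2-m_i$ relating the expressions \eqref{even} and \eqref{odd} (squaring destroys exactly the sign one would need). Controlling this sign through the induction, so that the PPT evaluations and the vanishing even power sums conspire to produce the cancellation, is the crux.
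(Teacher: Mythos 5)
The statement you are trying to prove is stated in the paper as a \emph{conjecture}; the paper offers no proof of it, so there is nothing on the paper's side to compare against. More importantly, your proposal is not a proof either. What you actually establish is: (a) a correct reformulation of the conjecture as a polynomial congruence in $t$ (and the correct observation, via the proof of Theorem~\ref{thm1}, that the unsigned sum is $S(r+a,r,a)=(-1)^aB(r+a,r-a)$, so only the signed sum is at issue); (b) the trivial cases $a=0$ (zero coefficient) and $a=r$ (where $\hh(\{2\}^r)=0$ follows from $\hh(2k)=0$ and Newton's identities via the stuffle product); and (c) the case $a=r-1$, which does reduce correctly via \cite[Th.~5.4]{PPT} to the binomial identity $(-1)^{o}2^{r-1}\sum_i(-1)^{\epsilon_i}\binom{2r-1}{2i-1}=2^{2r-2}$, verifiable from $(1+\sqrt{-1})^{2r-1}=(2\sqrt{-1})^{r-1}(1+\sqrt{-1})$. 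These checks are sound and are genuine partial evidence.

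But for the entire remaining range $1\le a\le r-2$ you give only a plan, and you yourself identify why it does not close: the coefficient $(-1)^{\#\{i\mid i:odd,\,k_i=2\}}$ depends on the \emph{positions} of the $2$'s within the index, whereas every tool you propose to run the induction with --- the stuffle product, the counting lemma behind Theorem~\ref{thm1} (which tracks only weight, depth, and the number of even entries), and the known evaluations of $\hh(\{2\}^{i-1},1,\{2\}^{r-i})$ --- is symmetric in the entries or at best insensitive to the parity of the position. No mechanism is exhibited that produces the position-dependent sign on the output of a stuffle computation, and the reflection $m_i\mapsto(p+1)/2-m_i$ does not supply it, as you note. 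So the ``induction on the number of $1$'s'' is not an argument but a restatement of the difficulty. The proposal should be regarded as a verification of the boundary cases plus a reformulation, not a proof; the conjecture remains open.
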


\section*{Acknowledgement}
The authors, in particular M.~K., thank Don Zagier with great admiration for his 
enthusiasm and commitment to mathematics which have always been our source of inspiration over several decades.
This work was supported by JSPS KAKENHI Grant Numbers JP16H06336 and JP21H04430.



\begin{thebibliography}{100}

\bibitem{AIK14}
T.~Arakawa, T.~Ibukiyama, and M.~Kaneko, {Bernoulli Numbers and Zeta Functions}, Springer, 2014.

\bibitem{Hof97}
M. E. Hoffman, 
\textit{The algebra of multiple harmonic series}, 
J. Algebra \textbf{194} (1997), 477--495.

\bibitem{Hof15}
M. E. Hoffman, 
\textit{Quasi-symmetric functions and mod p multiple harmonic sums},
Kyushu J. Math.\ \textbf{69} (2015), 345--366. 

\bibitem{Hof19}
M. E. Hoffman, 
\textit{An odd variant of multiple zeta values},
Commun. Number Theory Phys..\ \textbf{13} (2019), 529--567. 

\bibitem{Hweb} M.~Hoffman, \textit{References on multiple zeta values and Euler sums} (web page),
\texttt{https://www.usna.edu/Users/math/meh/biblio.html}

\bibitem{IKZ06}
K. Ihara, M. Kaneko and D. Zagier,
\textit{Derivation and double shuffle relations for multiple zeta values},
Compositio Math. \textbf{142} (2006), 307--338.

\bibitem{Kan19} 
M. Kaneko, 
\textit{An introduction to classical and finite multiple zeta values},
Publications math\'{e}matiques de {B}esan\c{c}on. {A}lg\`ebre et th\'{e}orie des nombres, 2019/1,  (2019), 103--129.

\bibitem{KZ} 
M. Kaneko and D. Zagier, 
\textit{Finite multiple zeta values}, 
in preparation. 

\bibitem{Mur22}
T.~Murakami, 
\textit{On finite multiple zeta values of level two II},
in preparation.

\bibitem{PPT12}
Kh.~Hessami Pilehrood, T.~Hessami Pilehrood, and R.~Tauraso, 
\textit{Congruences concerning Jacobi polynomials and Ap\'{e}ry-like formulae}, Int. J. Number Theory \textbf{8} (2012), no.~7, 1789--1811.

\bibitem{PPT}Kh.~Hessami Pilehrood, T.~Hessami Pilehrood and R.~Tauraso,
\textit{New properties of multiple harmonic sums modulo $p$ and $p$-analogues
of Leshchiner's series}, Trans. Amer. Math. Soc., \textbf{366} (2014), no.~6, 3131--3159. 

\bibitem{Sun00}
Z.-H.~Sun, \textit{Congruences concerning Bernoulli numbers and Bernoulli polynomials}, Discrete Appl. Math. \textbf{105} (2000), no.~1-3, 193--223.

\bibitem{Tsu04}
H. Tsumura, \textit{Combinatorial relations for Euler-Zagier sums}, Acta Arith. \textbf{111} (2004), 27--42. 

\bibitem{Zh08} J.~Zhao, \textit{Wolstenholme type theorem for multiple harmonic sums}, \textit{Int. J. Number Theory}, \textbf{4} (2008), 73--106.

\bibitem{Zh11} J.~Zhao, \textit{Mod $p$ structure of alternating and non-alternating multiple harmonic sums}, 
J.~Th\'{e}or. Nombres Bordeaux \textbf{23} (2011), no. 1, 299--308.

\bibitem{Zh}  J.~Zhao, Multiple zeta functions, multiple polylogarithms and their special values,
Series on Number Theory and its Applications, \textbf{12}. World Scientific Publishing Co., 2016. xxi+595 pp.

\end{thebibliography}
\end{document}